\DeclareMathOperator{\RE}{Re} \DeclareMathOperator{\IM}{Im}
\theoremstyle{plain}
\newtheorem{theorem}{Theorem}[section]
\newtheorem{lemma}[theorem]{Lemma}
\newtheorem{corollary}[theorem]{Corollary}
\theoremstyle{remark}
\newtheorem{remark}[theorem]{Remark}
\newtheorem{example}[theorem]{Example}
\begin{document}

\title[convolution of harmonic mappings]{Univalence and convexity in one direction of the convolution of harmonic mappings}

\author{Sumit Nagpal}
\address{Department of Mathematics, University of Delhi,
Delhi--110 007, India} 

\author{V. Ravichandran}
\address{Department of Mathematics, University of Delhi,
Delhi--110 007, India} 
\email{vravi68@gmail.com}

\begin{abstract}
Let $\mathcal{H}$ denote the class of all complex-valued harmonic functions $f$ in the open unit disk normalized by $f(0)=0=f_{z}(0)-1=f_{\bar{z}}(0)$, and let $\mathcal{A}$ be the subclass of $\mathcal{H}$ consisting of normalized analytic functions. For $\phi \in \mathcal{A}$, let $\mathcal{W}_{H}^{-}(\phi):=\{f=h+\bar{g} \in \mathcal{H}:h-g=\phi\}$ and $\mathcal{W}_{H}^{+}(\phi):=\{f=h+\bar{g} \in \mathcal{H}:h+g=\phi\}$ be subfamilies of $\mathcal{H}$. In this paper, we shall determine the conditions under which the harmonic convolution $f_1*f_2$ is univalent and convex in one direction if $f_1 \in \mathcal{W}_{H}^{-}(z)$ and $f_2 \in \mathcal{W}_{H}^{-}(\phi)$. A similar analysis is carried out if $f_1 \in \mathcal{W}_{H}^{-}(z)$ and $f_2 \in \mathcal{W}_{H}^{+}(\phi)$. Examples of univalent harmonic mappings constructed by way of convolution are also presented.\medskip
\end{abstract}
\keywords{univalent harmonic mappings; sense-preserving; convex in one direction; convolution}

\subjclass{Primary 31A05; Secondary 30C45}

\maketitle

\section{Introduction}
Let $\mathcal{H}$ denote the class of all complex-valued harmonic functions $f$ in the open unit disk $\mathbb{D}=\{z \in \mathbb{C}:|z|<1\}$ normalized by $f(0)=0=f_{z}(0)-1=f_{\bar{z}}(0)$. Such functions can be written in the form $f=h+\bar{g}$, where
\begin{equation}\label{eq1.1}
h(z)=z+\sum_{n=2}^{\infty}a_{n}z^{n}\quad\mbox{and}\quad g(z)=\sum_{n=2}^{\infty}b_{n}z^n
\end{equation}
are analytic in $\mathbb{D}$. The quantity $w_f=g'/h'$ is called the dilatation of a harmonic mapping $f$. A function $f\in \mathcal{H}$ is sense-preserving in $\mathbb{D}$ if the Jacobian $J_{f}=|h'|^2-|g'|^2$ is positive or equivalently $|g'|<|h'| $ in $\mathbb{D}$. The dilatation $w_f$ of a sense-preserving harmonic mapping $f$ is analytic and satisfies $|w_f|< 1$ in $\mathbb{D}$. Let $\mathcal{S}_{H}^{0}$ be the subclass of $\mathcal{H}$ consisting of univalent and sense-preserving functions. The classical family $\mathcal{S}$ of normalized analytic univalent functions is a subclass of $\mathcal{S}_{H}^{0}$. Finally, let $\mathcal{K}$ and $\mathcal{K}_{H}^{0}$ be the subclasses of $\mathcal{S}$ and $\mathcal{S}_{H}^{0}$ respectively, mapping $\mathbb{D}$ onto a convex domain.

Clunie and Sheil-Small \cite{cluniesheilsmall} introduced  the method of ``shear construction'' to produce a harmonic mapping with a specified dilatation onto a domain convex in one direction by shearing a given conformal mapping along parallel lines. A domain $\Omega \subset \mathbb{C}$ is convex in the direction of real (resp.\  imaginary) axis if its intersection with each horizontal (resp.\  vertical) line is connected. A function $f \in \mathcal{H}$ is convex in the direction of real (resp.\  imaginary) axis if it maps $\mathbb{D}$ onto a domain convex in the direction of real (resp.\  imaginary) axis. The shear construction is contained in the following lemma.

\begin{lemma}\cite{cluniesheilsmall}\label{lem1}
A sense-preserving harmonic function $f=h+\bar{g}$ in $\mathbb{D}$ is a univalent mapping of $\mathbb{D}$ convex in the direction of real (resp.\  imaginary) axis if and only if $h-g$ (resp.\  $h+g$) is a conformal univalent mapping of $\mathbb{D}$ convex in the direction of the real (resp.\  imaginary) axis.
\end{lemma}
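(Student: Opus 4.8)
The plan is to reduce everything to the real-axis statement --- the imaginary-axis version follows by applying the real-axis version to the harmonic function $-if(z)$, since this replaces the conformal partner $h-g$ by $-i(h+g)$, interchanges the two coordinate directions, and preserves univalence and sense-preservation (the normalization $f\in\mathcal H$ plays no role in the argument) --- and then to exploit the single identity
\[
\IM f(z)=\IM h(z)-\IM g(z)=\IM\big(h(z)-g(z)\big)=\IM\varphi(z),\qquad \varphi:=h-g,
\]
so that $f$ and $\varphi$ have \emph{exactly the same} level sets $E_c:=\{z\in\mathbb{D}:\IM f(z)=c\}=\{z\in\mathbb{D}:\IM\varphi(z)=c\}$ of the imaginary part. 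Everything is then driven by the behaviour of $f$ and $\varphi$ along these curves.

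First I would record the structural facts. Since $f$ is sense-preserving, $|h'|>|g'|\ge 0$ in $\mathbb{D}$, so $h'$ never vanishes and, writing $\omega:=g'/h'$ (analytic, $|\omega|<1$), $\varphi'=h'(1-\omega)$ is non-vanishing in $\mathbb{D}$. Hence $\IM\varphi$ has non-vanishing gradient, each non-empty $E_c$ is a real-analytic $1$-manifold, and it contains no closed loop (a level loop of the harmonic function $\IM\varphi$ would force $\varphi$ to be constant on the region it bounds, contradicting $\varphi'\neq0$); thus every component of $E_c$ is an arc with a regular parametrization $z(t)$, $z'(t)\neq0$. Moreover $J_f>0$ makes $f$ an orientation-preserving local diffeomorphism, so $f$ is a homeomorphism onto $f(\mathbb{D})$ as soon as it is injective; and a univalent analytic $\varphi$ is automatically conformal and a homeomorphism onto $\varphi(\mathbb{D})$.

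The heart of the matter is a short monotonicity computation along a component arc of $E_c$. With $a:=h'(z(t))z'(t)$ and $b:=g'(z(t))z'(t)=\omega(z(t))\,a$, one has $\tfrac{d}{dt}\varphi(z(t))=a-b=a(1-\omega)$ and $\tfrac{d}{dt}f(z(t))=a+\bar b$. Since $\IM\varphi$ is constant on $E_c$, the quantity $a-b$ is real and non-zero, so after orienting the arc we may take $a-b=\rho>0$; then $a=\rho/(1-\omega)$, whence
\[
\tfrac{d}{dt}\RE\varphi(z(t))=\RE(a-b)=\rho>0,\qquad
\tfrac{d}{dt}\RE f(z(t))=\RE(a+\bar b)=\RE(a+b)=\rho\,\RE\frac{1+\omega(z(t))}{1-\omega(z(t))}>0,
\]
the last inequality because $|\omega|<1$ forces $(1+\omega)/(1-\omega)$ into the right half-plane. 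So $\RE\varphi$ and $\RE f$ are each strictly increasing along every component of every $E_c$; consequently $\varphi$ and $f$ are injective on each such arc, with image a horizontal segment.

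Finally I would assemble the two implications; the hypothesis enters only to force $E_c$ to be connected. If $\varphi$ is univalent with $\varphi(\mathbb{D})$ convex in the direction of the real axis, then $\varphi(\mathbb{D})\cap\{\IM w=c\}$ is connected, hence so is $E_c=\varphi^{-1}\big(\varphi(\mathbb{D})\cap\{\IM w=c\}\big)$; being a single arc, $f$ is injective on it, and since $f(z_1)=f(z_2)$ puts $z_1,z_2$ in a common $E_c$, $f$ is univalent, while $f(\mathbb{D})\cap\{\IM w=c\}=f(E_c)$ is a horizontal segment, so $f$ is convex in the direction of the real axis. The converse is the mirror image: if $f$ is univalent with $f(\mathbb{D})$ convex in the direction of the real axis, then $E_c=f^{-1}\big(f(\mathbb{D})\cap\{\IM w=c\}\big)$ is connected, and running the same two steps with the monotonicity of $\RE\varphi$ shows $\varphi$ is univalent and $\varphi(\mathbb{D})$ convex in the direction of the real axis (and $\varphi$ is conformal, being analytic and univalent). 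I expect the only genuine obstacle to be the topological bookkeeping --- checking that the $E_c$ are honest arcs and that connectedness transfers back and forth through the homeomorphisms $f$ and $\varphi$ --- since the analytic content collapses to the one-line positivity of $\RE\frac{1+\omega}{1-\omega}$.
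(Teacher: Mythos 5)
The paper offers no proof of this lemma at all---it is quoted verbatim from Clunie and Sheil-Small---so the comparison is with the classical shear-construction argument, and your proposal is a correct reconstruction of exactly that argument: the reduction of the imaginary-axis case via $-if$, the identity $\IM f=\IM(h-g)$, the exclusion of closed level loops of $\IM(h-g)$ by the maximum principle, and the monotonicity of $\RE f$ and $\RE(h-g)$ along the common level arcs coming from $\RE\bigl((1+\omega)/(1-\omega)\bigr)>0$ are all the standard ingredients, assembled correctly (including the transfer of connectedness of the level sets through the homeomorphisms $f$ and $h-g$). I see no gap; the only implicit point worth stating explicitly is that $a-b$, being real, continuous and non-vanishing on a connected level arc, has constant sign there, which justifies choosing the orientation once for the whole arc.
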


For analytic functions $\phi(z)=z+\sum_{n=2}^{\infty}a_{n}z^{n}$ and $\psi(z)=z+\sum_{n=2}^{\infty}A_{n}z^{n}$, their convolution (or Hadamard product) is defined as $(\phi*\psi)(z)=z+\sum_{n=2}^{\infty}a_{n}A_{n}z^{n}$, $z\in \mathbb{D}$. In the harmonic case, with $f=h+\bar{g}$ and $F=H+\bar{G}$, their harmonic convolution is defined as $f*F=h*H+\overline{g*G}$. The right half-plane mapping $l(z)=z/(1-z)$ act as convolution identity for analytic functions, while the function $e(z)=z/(1-z)+\overline{z}^2/(1-\overline{z})\in \mathcal{H}$ is an identity under the harmonic convolutions. Note that $e \not\in \mathcal{S}_{H}^{0}$. Harmonic convolutions are investigated in \cite{cluniesheilsmall,dorff1,dorff2,goodloe,sumit1,sumit5,ruscheweyh}.

Let $\mathcal{A}$ be the subclass of $\mathcal{H}$ consisting of normalized analytic functions. For $\phi \in \mathcal{A}$, consider the following subfamilies of $\mathcal{H}$:
\[\mathcal{W}_{H}^{-}(\phi):=\{f=h+\bar{g} \in \mathcal{H}:h-g=\phi\},\]
and
\[\mathcal{W}_{H}^{+}(\phi):=\{f=h+\bar{g} \in \mathcal{H}:h+g=\phi\}.\]
Dorff \cite{dorff1} proved that if $f_1,f_2 \in \mathcal{W}_{H}^{+}(z/(1-z))$ and $f_1$, $f_2$, $f_1*f_2$ are all sense-preserving in $\mathbb{D}$ then $f_1*f_2\in \mathcal{S}_{H}^{0}$ and is convex in the direction of real axis. Dorff et al.\ \cite{dorff2} established the cases under which the assumption of $f_1*f_2$ to be sense-preserving can be omitted. In this paper, we shall investigate the convolution properties of the classes $\mathcal{W}_{H}^{-}(\phi)$ and $\mathcal{W}_{H}^{+}(\phi)$, for specific choices of $\phi$.

In Section \ref{sec2}, we shall determine the conditions under which the harmonic convolution $f_1*f_2$ is univalent and convex in one direction if $f_1 \in \mathcal{W}_{H}^{-}(z)$ and $f_2 \in \mathcal{W}_{H}^{-}(\phi)$. Although the functions in these classes need not be univalent in $\mathbb{D}$, but it has been shown that their convolution is univalent in $\mathbb{D}$ and convex in the direction of real axis, under certain milder conditions. It turns out to be a simple, but an effective tool in the construction of univalent harmonic mappings convex in a given direction.

A similar analysis is carried out in Section \ref{sec3} to determine the conditions for the harmonic convolution $f_1*f_2$ to be univalent and convex in the direction of imaginary axis if $f_1 \in \mathcal{W}_{H}^{-}(z)$ and $f_2 \in \mathcal{W}_{H}^{+}(\phi)$.

\section{Univalence and Convexity in the direction of real axis}\label{sec2}

The first theorem of this section gives a sufficient condition for univalence and convexity in the direction of real axis of the harmonic convolution $f_1*f_2$ if $f_1 \in \mathcal{W}_{H}^{-}(z)$ and $f_2 \in \mathcal{W}_{H}^{-}(\phi)$.

\begin{theorem}\label{th2.1}
Let $f_1=h_1+\overline{g}_1\in \mathcal{W}_{H}^{-}(z)$ and $f_2=h_2+\overline{g}_2 \in \mathcal{W}_{H}^{-}(\phi)$. Then
\begin{enumerate}
  \item [(i)] $f_1*f_2 \in \mathcal{W}_{H}^{-}(h_1*\phi)$;
  \item [(ii)] If the analytic function $h_1*\phi$ is univalent and convex in the direction of real axis and
  \[\RE \frac{(h_1*h_2)'(z)}{(h_1*\phi)'(z)}>\frac{1}{2},\quad z\in \mathbb{D}\]
  then $f_1*f_2 \in S_{H}^{0}$ and is convex in the direction of real axis.
\end{enumerate}
\end{theorem}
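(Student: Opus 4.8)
The plan is to exploit Lemma~\ref{lem1} as the main engine. For part (i), I would simply unwind the definition of the harmonic convolution: writing $f_1=h_1+\overline{g}_1$ with $h_1-g_1=z$ and $f_2=h_2+\overline{g}_2$ with $h_2-g_2=\phi$, the convolution is $f_1*f_2=h_1*h_2+\overline{g_1*g_2}$, and I want to compute $(h_1*h_2)-(g_1*g_2)$. The key observation is that convolution with $z$ acts as the identity on analytic functions, so $g_1 = h_1 - z$ and hence $h_1*z = z$; more usefully, since $h_1*(h_2-g_2)=h_1*\phi$ and $h_1*(h_2-g_2)=h_1*h_2 - h_1*g_2$, I need to relate $h_1*g_2$ to $g_1*g_2$. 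Using $h_1 = g_1 + z$ we get $h_1*g_2 = g_1*g_2 + z*g_2 = g_1*g_2 + g_2$, so $(h_1*h_2)-(g_1*g_2) = h_1*\phi + g_2 - g_2\cdots$ — I would carry this bookkeeping through carefully; the upshot is that $(h_1*h_2)-(g_1*g_2)=h_1*\phi$, which is exactly the assertion $f_1*f_2\in\mathcal{W}_H^{-}(h_1*\phi)$.

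For part (ii), set $F:=f_1*f_2=H+\overline{G}$ with $H=h_1*h_2$ and $G=g_1*g_2$, so that by part (i) we have $H-G=h_1*\phi=:\Phi$, which is given to be analytic, univalent, and convex in the direction of the real axis. By Lemma~\ref{lem1}, to conclude that $F\in\mathcal{S}_H^{0}$ and is convex in the direction of the real axis, it suffices to show that $F$ is sense-preserving in $\mathbb{D}$, i.e.\ that the dilatation $w_F=G'/H'$ satisfies $|w_F|<1$. Now $H'=G'+\Phi'$, so $w_F = (H'-\Phi')/H' = 1 - \Phi'/H'$, and the hypothesis reads $\RE(H'/\Phi') > 1/2$. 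The standard fact that $\RE\,\zeta > 1/2$ implies $|1-\zeta^{-1}|<1$ (which follows because $\RE\,\zeta>1/2 \iff |\zeta - 1| < |\zeta|$, i.e.\ the point $\zeta$ is closer to $1$ than to the origin, equivalently $|1 - \zeta^{-1}| < 1$) applied with $\zeta = H'(z)/\Phi'(z)$ gives $|w_F(z)| = |1 - \Phi'(z)/H'(z)| < 1$ on $\mathbb{D}$. Hence $F$ is sense-preserving.

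Finally I would check the hypotheses of Lemma~\ref{lem1} are fully met: $F$ is harmonic and normalized (inherited from $f_1,f_2\in\mathcal{H}$ and the fact that convolution preserves the normalization, since $h_1*h_2$ has the form $z+\sum a_n A_n z^n$ and $g_1*g_2$ has the form $\sum b_n B_n z^n$), $F$ is sense-preserving by the paragraph above, and $H-G=\Phi$ is a conformal univalent mapping convex in the direction of the real axis by hypothesis. Lemma~\ref{lem1} then yields directly that $F$ is univalent and convex in the direction of the real axis, and being sense-preserving it lies in $\mathcal{S}_H^{0}$. The only genuinely delicate point is the algebraic identity in part (i) — getting the convolution bookkeeping right so that the ``$g$ minus $g$'' telescoping produces exactly $h_1*\phi$ — and making sure one does not need $\RE(H'/\Phi')>1/2$ to already force $\Phi'$ nonvanishing (it does, automatically, since the inequality implies $H'/\Phi'$ is a well-defined analytic function with positive real part, so $\Phi'$ cannot vanish where $H'$ does not, and a short separate argument handles common zeros — in fact $\Phi$ univalent already gives $\Phi'\ne0$). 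I expect part (i) to be the main obstacle only in the sense of care, not depth; part (ii) is then a clean application of the shear lemma.
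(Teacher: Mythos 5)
Your part (ii) is sound and is essentially the paper's argument in a different guise: the paper verifies $\RE\bigl((1+w_{f_1*f_2})/(1-w_{f_1*f_2})\bigr)=2\RE\frac{(h_1*h_2)'}{(h_1*\phi)'}-1>0$, while you verify $|w_{f_1*f_2}|=\bigl|1-\Phi'/H'\bigr|<1$ via the equivalence $\RE\zeta>\tfrac12\iff|\zeta-1|<|\zeta|$; these are the same half-plane-to-disk computation, and your remarks about $\Phi'\neq0$ (from univalence of $h_1*\phi$) and preservation of the normalization are correct.

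Part (i), however, contains a step that fails as written. You assert $z*g_2=g_2$, but convolution with $z$ is not the identity: $(z*\psi)(z)=\psi'(0)\,z$, and the identity $\psi*z=z$ quoted in the paper holds only because there $\psi'(0)=1$. Since $g_2(z)=\sum_{n\ge2}b_nz^n$ has $g_2'(0)=0$, in fact $z*g_2=0$. With your erroneous rule the bookkeeping gives $h_1*g_2=g_1*g_2+g_2$ and hence $(h_1*h_2)-(g_1*g_2)=h_1*\phi+g_2$, which is not the claimed identity; the ``$+g_2-g_2\cdots$'' telescoping you invoke has no source, so as written (i) is not established. The fix is one line: from $h_1=g_1+z$ and $z*g_2=0$ you get $h_1*g_2=g_1*g_2$, whence $(h_1*h_2)-(g_1*g_2)=(h_1*h_2)-(h_1*g_2)=h_1*(h_2-g_2)=h_1*\phi$ — which is actually more direct than the paper's route (the paper expands both $z=(h_1-g_1)*(h_2+g_2)$ and $(h_1+g_1)*\phi=(h_1+g_1)*(h_2-g_2)$ and averages, using $z*\phi=z$). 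So your strategy for (i) is viable, but you must replace the false identity $z*g_2=g_2$ by $z*g_2=0$ and carry the computation through rather than asserting the upshot.
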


\begin{proof}
Recall that $\psi *z=z=z*\psi$ for any analytic function $\psi$ with $\psi'(0)=1$, so we can write
\begin{align*}
z=z*(h_2+g_2)&=(h_1-g_1)*(h_2+g_2)\\
             &=(h_1*h_2)+(h_1*g_2)-(h_2*g_1)-(g_1*g_2)
\end{align*}
and
\begin{align*}
(h_1+g_1)*\phi&=(h_1+g_1)*(h_2-g_2)\\
             &=(h_1*h_2)-(h_1*g_2)+(h_2*g_1)-(g_1*g_2).
\end{align*}
Thus, we obtain
\begin{align*}
(h_1*h_2)-(g_1*g_2)&=\frac{1}{2}[z+(h_1+g_1)*\phi]\\
                   &=\frac{1}{2}[z+2(h_1*\phi)-z*\phi]=h_1*\phi,
\end{align*}
since $h_1-g_1=z$. This proves (i).

For the proof of (ii), note that $(h_1*h_2)-(g_1*g_2)$ is univalent and convex in the direction of real axis. In order to apply Lemma \ref{lem1} to the function $f_1*f_2$, we need to show that the dilatation $w_{f_1*f_2}=(g_1*g_2)'/(h_1*h_2)'$ of $f_1*f_2$ satisfies $|w_{f_1*f_2}(z)|<1$ for all $z \in \mathbb{D}$. Equivalently, it suffices to show that $\RE ((1+w_{f_1*f_2})/(1-w_{f_1*f_2}))>0$ in $\mathbb{D}$. Using the identity $h_1*\phi=(h_1*h_2)-(g_1*g_2)$, we see that
\begin{align*}
\RE \left(\frac{1+w_{f_1*f_2}}{1-w_{f_1*f_2}}\right)&=\RE \frac{(h_1*h_2)'+(g_1*g_2)'}{(h_1*h_2)'-(g_1*g_2)'}\\
                                                    &=\RE \frac{(h_1*h_2)'+(g_1*g_2)'}{(h_1*\phi)'}\\
                                                    &=2\RE \frac{(h_1*h_2)'}{(h_1*\phi)'}-1
\end{align*}
which is clearly positive, under the hypothesis of the theorem.
\end{proof}

Making use of Theorem \ref{th2.1}, we will investigate the convolution properties of functions in the class $\mathcal{W}_{H}^{-}(z)$. Taking $\phi(z)\equiv z$, we obtain
\begin{corollary}\label{cor2.2}
Let both $f_1=h_1+\overline{g}_1, f_2=h_2+\overline{g}_2 \in \mathcal{W}_{H}^{-}(z)$. Then
\begin{itemize}
  \item [(i)] $f_1*f_2 \in \mathcal{W}_{H}^{-}(z)$. In particular, the class $\mathcal{W}_{H}^{-}(z)$ is closed under harmonic convolution;
  \item [(ii)] If $\RE (h_1*h_2)'>1/2$ in $\mathbb{D}$, then $f_1*f_2 \in S_{H}^{0}$ and is convex in the direction of real axis.
\end{itemize}
\end{corollary}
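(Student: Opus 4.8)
The plan is to read off both assertions from Theorem~\ref{th2.1} by specializing to $\phi(z)\equiv z$. The single fact that drives everything is that the linear map $z$ is an absorbing element for the Hadamard product: $\psi*z=z$ for every analytic $\psi$ with $\psi'(0)=1$, a relation already used in the proof of Theorem~\ref{th2.1}. In particular $h_1*\phi=h_1*z=z$.

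With this in hand, part~(i) is immediate: Theorem~\ref{th2.1}(i) gives $f_1*f_2\in\mathcal{W}_{H}^{-}(h_1*\phi)=\mathcal{W}_{H}^{-}(z)$, and since $f_1,f_2$ range over all of $\mathcal{W}_{H}^{-}(z)$, this is exactly the statement that $\mathcal{W}_{H}^{-}(z)$ is closed under harmonic convolution. For part~(ii) I would verify the two hypotheses of Theorem~\ref{th2.1}(ii). First, the analytic function $h_1*\phi=z$ is univalent and maps $\mathbb{D}$ onto the disk, which is convex, hence convex in the direction of the real axis. Second, since $(h_1*\phi)'(z)\equiv 1$, the quotient condition
\[
\RE\frac{(h_1*h_2)'(z)}{(h_1*\phi)'(z)}>\frac12,\qquad z\in\mathbb{D},
\]
is literally the assumed inequality $\RE(h_1*h_2)'>1/2$ in $\mathbb{D}$. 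Theorem~\ref{th2.1}(ii) then delivers $f_1*f_2\in S_{H}^{0}$ and convexity in the direction of the real axis.

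There is no genuine obstacle here, the corollary being a direct specialization; the only point to state carefully is that it is the linear term $z$, not the convolution identity $l(z)=z/(1-z)$, that reduces $h_1*\phi$ to $z$. Should one prefer a self-contained argument, the computation in the proof of Theorem~\ref{th2.1} can be reproduced in miniature: expanding $z=z*(h_2+g_2)$ and using $h_1-g_1=z$ gives $(h_1*h_2)-(g_1*g_2)=z$, so $\RE\frac{1+w_{f_1*f_2}}{1-w_{f_1*f_2}}=2\RE(h_1*h_2)'-1>0$, whence $|w_{f_1*f_2}|<1$ and Lemma~\ref{lem1} applies to $f_1*f_2$ since $z$ is univalent and convex in the direction of the real axis.
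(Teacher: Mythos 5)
Your proposal is correct and is exactly the paper's route: the corollary is obtained by setting $\phi(z)\equiv z$ in Theorem \ref{th2.1}, using $h_1*z=z$ so that $(h_1*\phi)'\equiv 1$ and the quotient condition becomes $\RE(h_1*h_2)'>1/2$. The optional self-contained computation you sketch is just the proof of Theorem \ref{th2.1} specialized, so nothing differs in substance.
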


It is easy to see that the range of $f \in \mathcal{W}_{H}^{-}(z)$ is contained in the horizontal strip $|\IM w|<1$. Corollary \ref{cor2.2} shows that the same is true for the harmonic convolution $f_1*f_2$ if $f_1,f_2\in \mathcal{W}_{H}^{-}(z)$. Now, we provide some examples to illustrate Corollary \ref{cor2.2}.

\begin{example}\label{ex2.3}
For $n=2,3,\ldots$, let $p_n=u_n+\overline{v}_n$ be the harmonic mappings of $\mathbb{D}$ with $u_n=z+z^n/n$ and $v_n=z^n/n$. Then $p_n \in \mathcal{W}_{H}^{-}(z)$ and
\[p_n*p_n=u_n*u_n+\overline{v_n*v_n}=z+\frac{z^n}{n^2}+\frac{\overline{z}^n}{n^2}\]
for $n=2,3,\ldots$. For $z \in \mathbb{D}$, $\RE (u_n*u_n)'(z)=1+\RE z^{n-1}/n>1-1/n\geq 1/2$ which imply that the convolution maps $p_n*p_n$ $(n=2,3,\ldots)$ are univalent and convex in the direction of real axis, by Corollary \ref{cor2.2}. The images of the unit disk under $p_n$ and $p_n*p_n$ for $2\leq n\leq 4$ are shown in Figure \ref{fig1} as plots of the images of equally spaced radial segments and concentric circles.

\begin{figure}[htb!]
\begin{center}
  \subfigure[$p_2$]{\includegraphics[width=2.5in,height=1.5in]{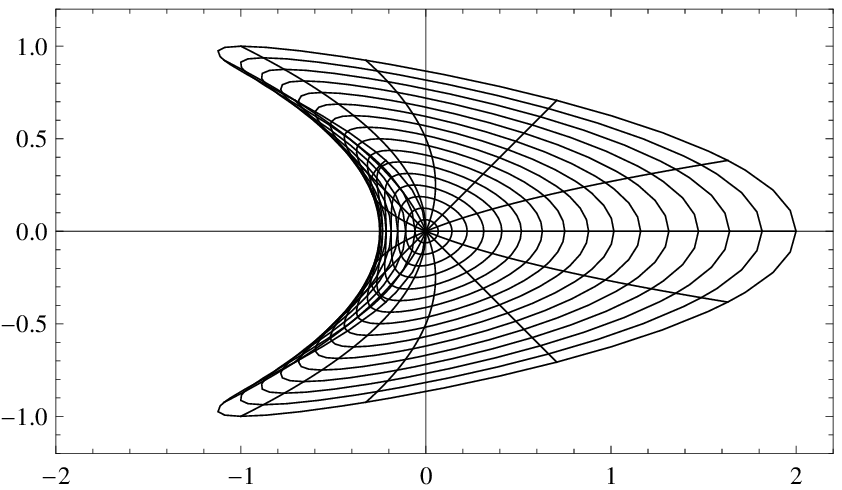}}\hspace{5pt}
  \subfigure[$p_2*p_2$]{\includegraphics[width=2.5in,height=1.5in]{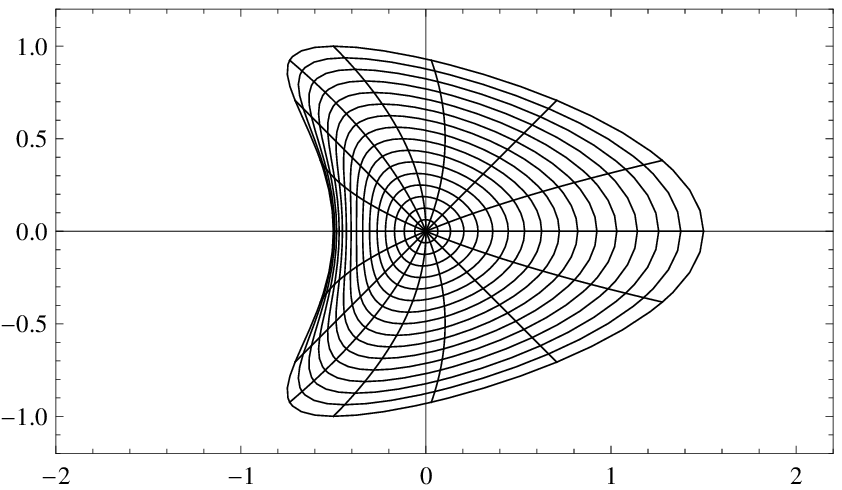}}\hspace{5pt}
  \subfigure[$p_3$]{\includegraphics[width=2.5in,height=1.5in]{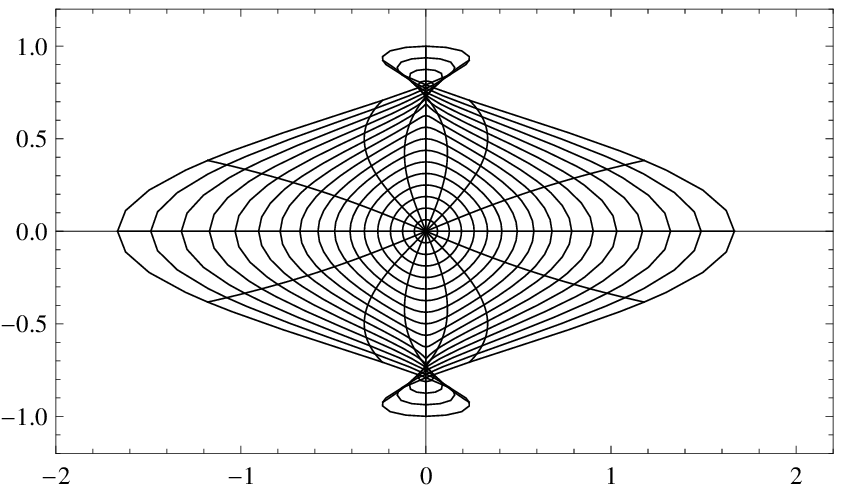}}\hspace{5pt}
  \subfigure[$p_3*p_3$]{\includegraphics[width=2.5in,height=1.5in]{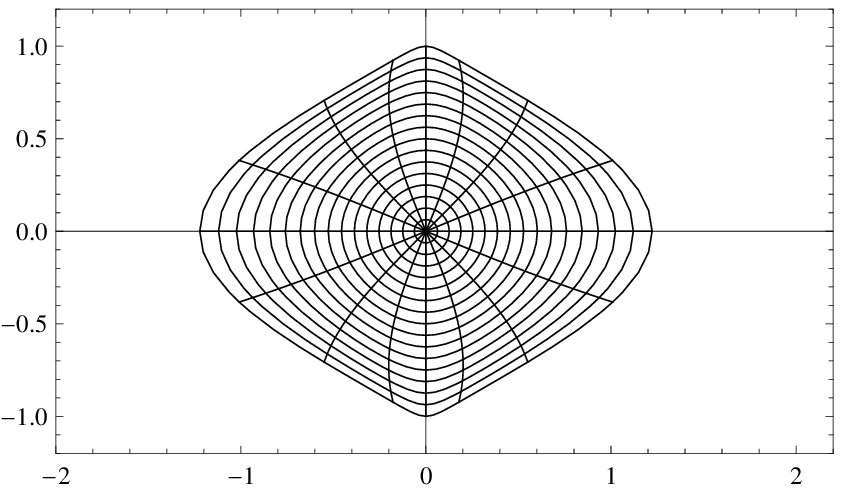}}\hspace{5pt}
  \subfigure[$p_4$]{\includegraphics[width=2.5in,height=1.5in]{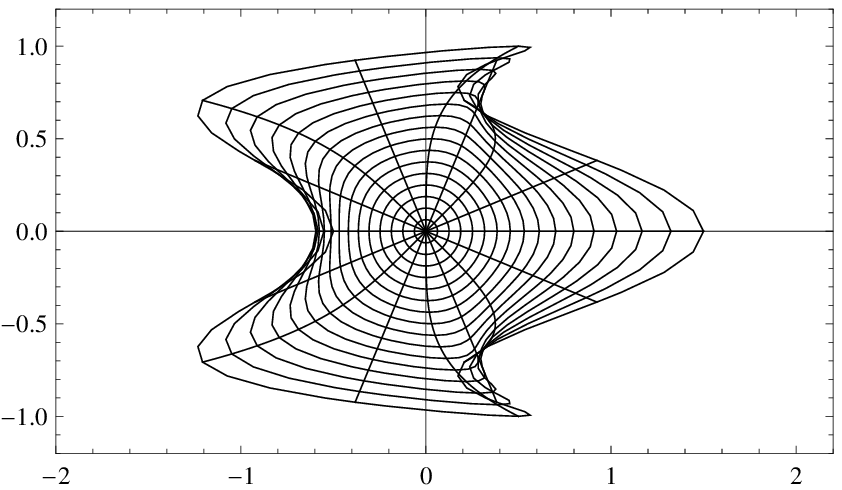}}\hspace{5pt}
  \subfigure[$p_4*p_4$]{\includegraphics[width=2.5in,height=1.5in]{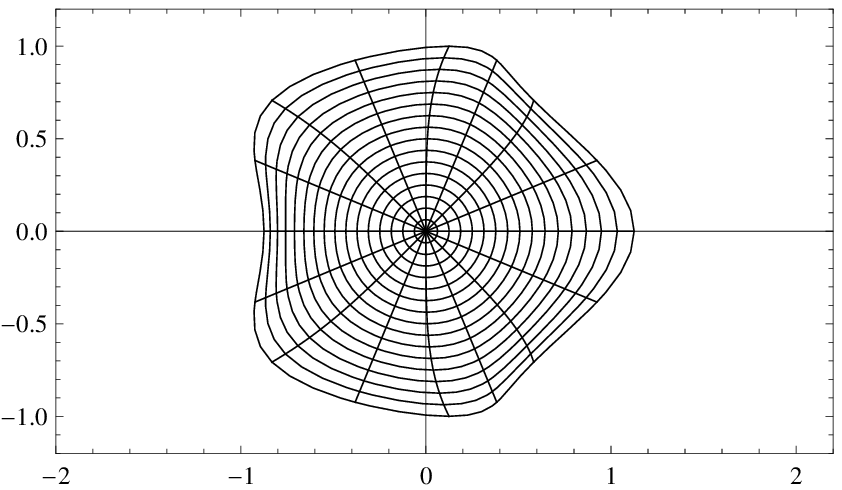}}
  \caption{Images of the functions $p_n(z)=z+z^n/n+\bar{z}^n/n$ and their convolutions $p_n*p_n$ for $n=2,3,4$.}\label{fig1}
\end{center}
\end{figure}

Note that the mappings $p_n\in \mathcal{W}_{H}^{-}(z)$ $(n=2,3,\ldots)$ are not univalent in $\mathbb{D}$ since their Jacobian $J_{p_n}(z)=1+2\RE z^{n-1}$ vanishes inside $\mathbb{D}$.
\end{example}

\begin{remark}\label{rem2.4}
The images of the convolution maps $p_n*p_n$ (see Figure \ref{fig1}) in Example \ref{ex2.3} suggest that $p_n*p_n \not\in \mathcal{K}_{H}^{0}$. Thus, the conclusion of the Corollary \ref{cor2.2} can't to strengthened to $f_1*f_2 \in \mathcal{K}_{H}^{0}$.
\end{remark}

\begin{example}\label{ex2.5}
For $k=1,2,\ldots$, let $\Gamma_k=\mu_k+\overline{\nu}_k$ be the shears of the identity map in the direction of real axis with dilatation $w_{\Gamma_k}(z)=z^k$. Then $\Gamma_k \in \mathcal{W}_H^-(z)$ and
\[\mu_k(z)=z+\sum_{n=1}^{\infty}\frac{z^{nk+1}}{nk+1},\quad \nu_k(z)=\sum_{n=1}^{\infty}\frac{z^{nk+1}}{nk+1} \quad (z \in \mathbb{D};k=1,2,\ldots).\]
In particular, we have
\begin{equation}\label{eq2.1}
\Gamma_1=\mu_1+\overline{\nu}_1,\quad  \mu_1(z)=-\log(1-z), \quad \nu_1(z)=-z-\log(1-z);
\end{equation}
and
\[\Gamma_2=\mu_2+\overline{\nu}_2,\quad  \mu_2(z)=\frac{1}{2}\log \left(\frac{1+z}{1-z}\right), \quad \nu_2(z)=-z+\frac{1}{2}\log \left(\frac{1+z}{1-z}\right).\]

\begin{figure}[here]
\begin{center}
  \subfigure[$\Gamma_1$]{\includegraphics[width=2.5in,height=1.5in]{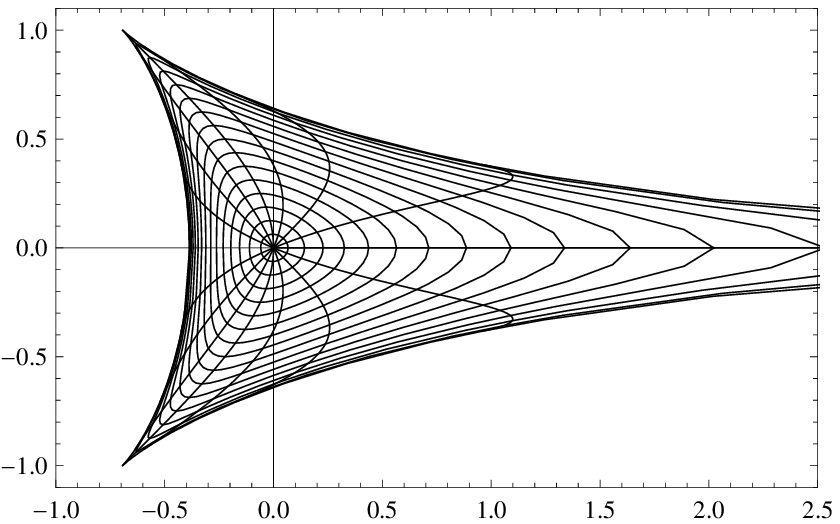}}\hspace{5pt}
  \subfigure[$\Gamma_1*\Gamma_1$]{\includegraphics[width=2.5in,height=1.5in]{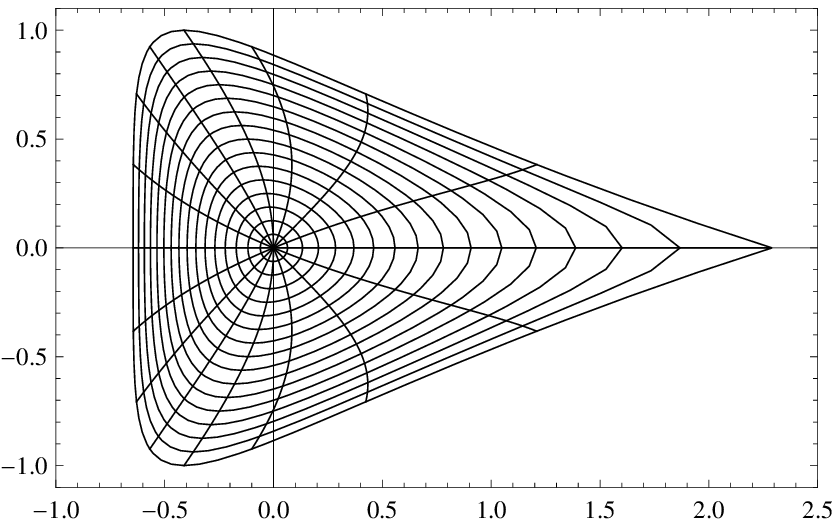}}\hspace{5pt}
  \subfigure[$\Gamma_2$]{\includegraphics[width=2.5in,height=1.5in]{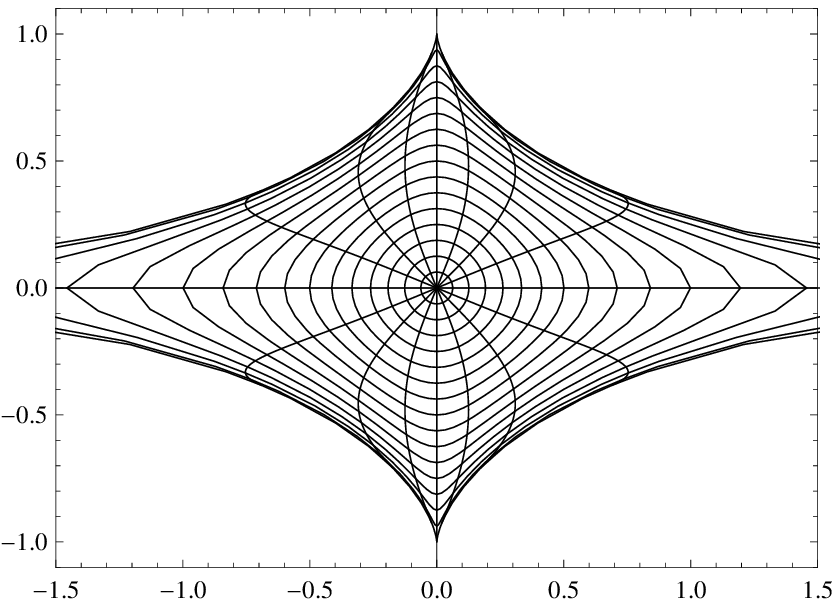}}\hspace{5pt}
  \subfigure[$\Gamma_2*\Gamma_2$]{\includegraphics[width=2.5in,height=1.5in]{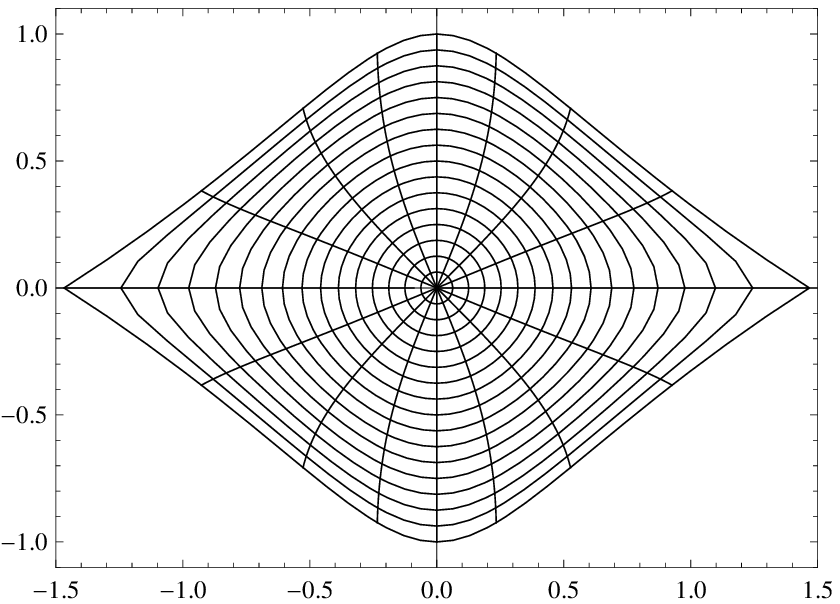}}\hspace{5pt}
  \subfigure[$\Gamma_3$]{\includegraphics[width=2.5in,height=1.5in]{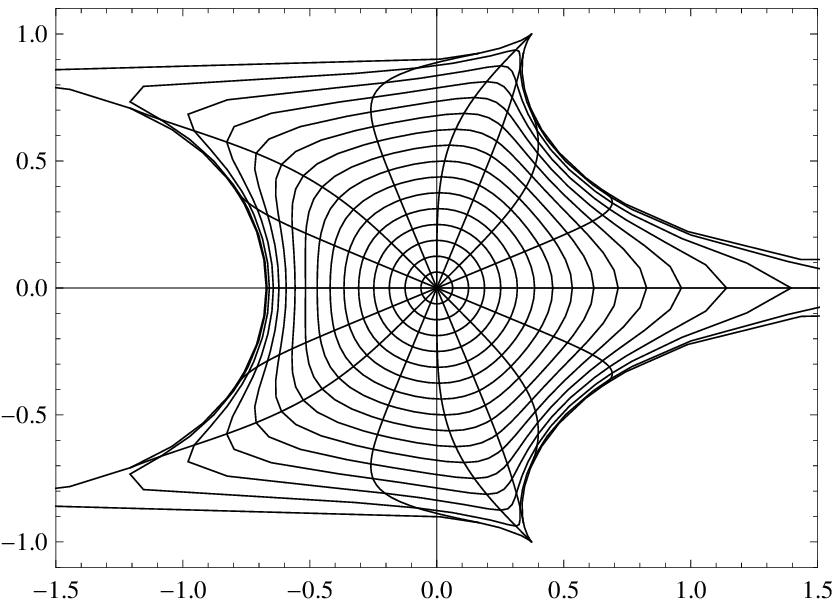}}\hspace{5pt}
  \subfigure[$\Gamma_3*\Gamma_3$]{\includegraphics[width=2.5in,height=1.5in]{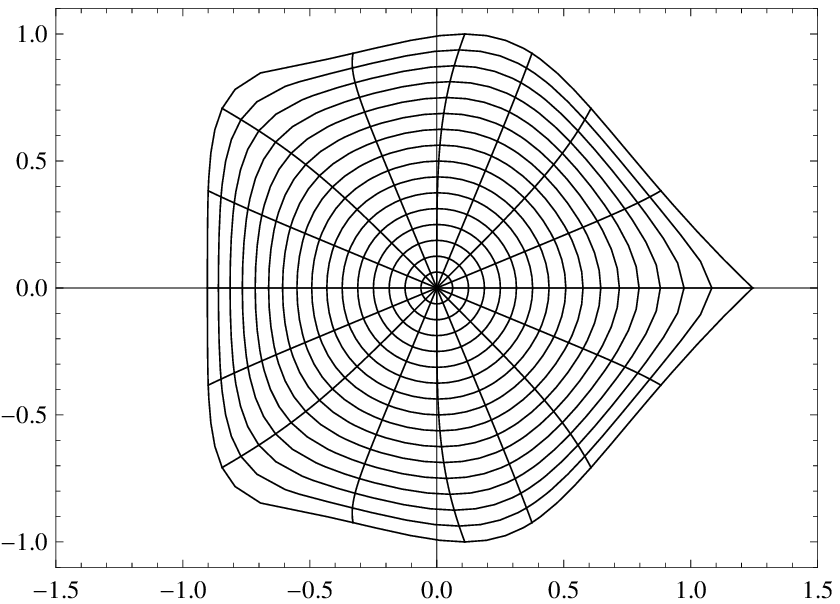}}
  \caption{Images of the functions $\Gamma_k$ and their convolutions $\Gamma_k*\Gamma_k$ for $k=1,2,3$.}\label{fig2}
\end{center}
\end{figure}

To apply Corollary \ref{cor2.2} for the functions $\Gamma_k*\Gamma_k$, we need to show that $\RE (\mu_k*\mu_k)'>1/2$ ($k=1,2,\ldots$) in $\mathbb{D}$. Note that
\[(\mu_k*\mu_k)(z)=z+\sum_{n=1}^{\infty}\frac{z^{nk+1}}{(nk+1)^2},\quad z \in \mathbb{D}.\]
Differentiation with respect to $z$ leads to an expression
\[z(\mu_k*\mu_k)''(z)+(\mu_k*\mu_k)'(z)=1+\sum_{n=1}^{\infty}z^{nk}=\frac{1}{1-z^k},\quad z \in \mathbb{D}.\]
This shows that
\[\RE (z(\mu_k*\mu_k)''(z)+(\mu_k*\mu_k)'(z))>1/2,\quad z\in \mathbb{D}.\]
By \cite[Theorem 2]{silverman}, it follows that $\RE (\mu_k*\mu_k)'>\log 2\simeq 0.69314$ in $\mathbb{D}$. Now, Corollary \ref{cor2.2} shows that the convolution maps
\[(\Gamma_k*\Gamma_k)(z)=z+\sum_{n=1}^{\infty}\frac{z^{nk+1}}{(nk+1)^2}+\overline{\sum_{n=1}^{\infty}\frac{z^{nk+1}}{(nk+1)^2}} \quad (z \in \mathbb{D},k=1,2,\ldots)\]
are univalent and convex in the direction of real axis. Figure \ref{fig2} depicts the image domains $\Gamma_k(\mathbb{D})$ and $(\Gamma_k*\Gamma_k)(\mathbb{D})$ for $k=1,2,3$.
\end{example}

The next theorem deals with the convolution of $\Gamma_1 \in \mathcal{W}_{H}^{-}(z)$ given by \eqref{eq2.1} with harmonic mappings $f \in \mathcal{W}_{H}^{-}(\phi)$.

\begin{theorem}\label{th2.6}
Let $f=h+\overline{g} \in \mathcal{W}_{H}^{-}(\phi)$ with $\RE h(z)/\phi(z)>1/2$ for all $z \in \mathbb{D}$. If the analytic function $\mu_1*\phi$ is univalent and convex in the direction of real axis, then $\Gamma_1*f \in \mathcal{S}_{H}^{0}$ and is convex in the direction of real axis, $\Gamma_1=\mu_1+\overline{\nu}_1$ is given by \eqref{eq2.1}.
\end{theorem}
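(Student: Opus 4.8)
The plan is to obtain Theorem \ref{th2.6} as an immediate consequence of Theorem \ref{th2.1}, applied with $f_1=\Gamma_1=\mu_1+\overline{\nu}_1\in\mathcal{W}_{H}^{-}(z)$ and $f_2=f=h+\overline{g}\in\mathcal{W}_{H}^{-}(\phi)$. (Note that $\Gamma_1\in\mathcal{W}_{H}^{-}(z)$ since $\mu_1-\nu_1=-\log(1-z)-(-z-\log(1-z))=z$.) By Theorem \ref{th2.1}(i) we already have $\Gamma_1*f\in\mathcal{W}_{H}^{-}(\mu_1*\phi)$, and by hypothesis $\mu_1*\phi$ is univalent and convex in the direction of the real axis; so the only thing left to check in order to invoke Theorem \ref{th2.1}(ii) (with $h_1=\mu_1$, $g_1=\nu_1$, $h_2=h$, $g_2=g$) is the analytic inequality
\[
\RE\frac{(\mu_1*h)'(z)}{(\mu_1*\phi)'(z)}>\frac{1}{2},\qquad z\in\mathbb{D}.
\]

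The key observation, which is really the entire content of the argument, is that $\mu_1(z)=-\log(1-z)=\sum_{n\ge 1}z^{n}/n$ acts as an integration operator under the Hadamard product: for any analytic $\psi$ in $\mathbb{D}$ with $\psi(0)=0$, writing $\psi(z)=\sum_{n\ge 1}c_{n}z^{n}$ one gets $(\mu_1*\psi)(z)=\sum_{n\ge 1}(c_{n}/n)z^{n}=\int_{0}^{z}\psi(t)/t\,dt$, so that $(\mu_1*\psi)'(z)=\psi(z)/z$. Applying this with $\psi=h$ and with $\psi=\phi$ — in both cases $\psi(z)/z$ extends analytically to $\mathbb{D}$ with value $1$ at the origin, by the normalizations of $h$ and $\phi$ — yields
\[
\frac{(\mu_1*h)'(z)}{(\mu_1*\phi)'(z)}=\frac{h(z)/z}{\phi(z)/z}=\frac{h(z)}{\phi(z)},
\]
so the displayed inequality is precisely the standing hypothesis $\RE\,h(z)/\phi(z)>1/2$ on $\mathbb{D}$. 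With this in hand, Theorem \ref{th2.1}(ii) gives $\Gamma_1*f\in\mathcal{S}_{H}^{0}$ and convex in the direction of the real axis, which completes the proof.

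There is essentially no substantive obstacle here: the proof is a short reduction, and the only point requiring a little care is the identity $(\mu_1*\psi)'(z)=\psi(z)/z$ together with the observation that the quotient $(\mu_1*h)'/(\mu_1*\phi)'$ collapses to $h/\phi$ (including the removable singularity at $z=0$). Once that is noted, everything follows from the already-established Theorem \ref{th2.1}.
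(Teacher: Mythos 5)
Your proposal is correct and follows essentially the same route as the paper: both reduce to the identities $(\mu_1*h)'(z)=h(z)/z$ and $(\mu_1*\phi)'(z)=\phi(z)/z$ (the paper derives them via $z(\mu_1*\psi)'=z\mu_1'*\psi=\tfrac{z}{1-z}*\psi=\psi$, you via the power-series/integral form of $\mu_1$), so that the ratio collapses to $h/\phi$ and Theorem \ref{th2.1}(ii) applies directly.
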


\begin{proof}
Note that $z(\mu_1*h)'=z\mu'_1*h=z/(1-z)*h=h$. Similarly, $z(\mu_1*\phi)'=\phi$. Therefore $\RE (\mu_1*h)'/(\phi*h)'=\RE h/\phi>1/2$ in $\mathbb{D}$. The result now follows by applying Theorem \ref{th2.1}.
\end{proof}

\begin{example}\label{ex2.7}
Consider the non-univalent harmonic function $f=h+\overline{g}$ where $h(z)=z(1+z)/(1-z)^2$ and $g(z)=z^2(1+z)/(1-z)^2$. Then $f \in \mathcal{W}_{H}^{-}(\phi)$, $\phi(z)=z(1+z)/(1-z)$. Note that $\RE h(z)/\phi(z)=\RE 1/(1-z)>1/2$ and $(\mu_1*\phi)(z)=-z-2\log (1-z)$ is univalent and convex in the direction of real axis. By Theorem \ref{th2.6}, the convolution
\[(\Gamma_1*f)(z)=\frac{2z}{1-z}+\log (1-z)+\overline{\frac{3z-z^2}{1-z}+3\log (1-z)},\quad z\in \mathbb{D}.\]
is univalent and convex in the direction of real axis (see Figure \ref{fig3}).

\begin{figure}[here]
\begin{center}
  \subfigure{\includegraphics[width=2.8in,height=1.8in]{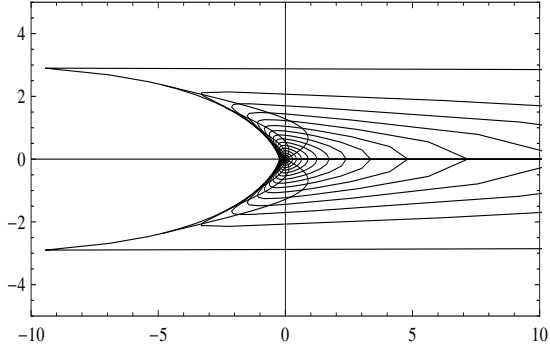}}
  \caption{Image of the convolution of $\Gamma_1$ with $f(z)=z(1+z)/(1-z)^2+\overline{z}^2(1+\overline{z})/(1-\overline{z})^2$.}\label{fig3}
\end{center}
\end{figure}
\end{example}

In \cite{sumit3}, the authors introduced the notion of positive harmonic Alexander operator $\Lambda_{H}^{+}:\mathcal{H}\rightarrow\mathcal{H}$ defined by
\begin{equation}\label{eq2.2}
\Lambda_{H}^{+}[f](z)=\int_0^z \frac{h(t)}{t}\,dt+\overline{\int_0^z \frac{g(t)}{t}\,dt},\quad (f=h+\bar{g}\in \mathcal{H};z\in \mathbb{D}).
\end{equation}
In general, $\Lambda_{H}^{+}[\mathcal{S}_{H}^{0}]\not\subset \mathcal{S}_{H}^{0}$. Since $\Lambda_{H}^{+}[f]=f*\Gamma_1$ where $\Gamma_1$ is given by \eqref{eq2.1}, Theorem \ref{th2.6} determines a class of harmonic mappings that is mapped into $\mathcal{S}_{H}^{0}$ by the positive harmonic Alexander operator $\Lambda_{H}^{+}$. For specific choices of $\phi$ in Theorem \ref{th2.6}, we obtain the following corollary.

\begin{corollary}\label{cor2.8}
Suppose that $f=h+\overline{g} \in \mathcal{H}$ satisfies one of the following three conditions:
\begin{enumerate}
  \item [(i)] $f \in \mathcal{W}_{H}^{-}(z)$ and $\RE h(z)/z>1/2$ $(z\in \mathbb{D})$;
  \item [(ii)] $f \in \mathcal{W}_{H}^{-}(z/(1-z))$ and $\RE (1-z)h(z)/z>1/2$ $(z\in \mathbb{D})$;
  \item [(iii)] $f \in \mathcal{W}_{H}^{-}(z/(1-z)^2)$ and $\RE (1-z)^2h(z)/z>1/2$ $(z\in \mathbb{D})$.
\end{enumerate}
Then $\Lambda_{H}^{+}[f] \in \mathcal{S}_{H}^{0}$ and is convex in the direction of real axis.
\end{corollary}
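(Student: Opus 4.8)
The plan is to obtain all three conclusions from Theorem~\ref{th2.6} by specializing $\phi$, after recalling (as noted just before the statement) that $\Lambda_{H}^{+}[f]=f*\Gamma_1=\Gamma_1*f$. In case (i) take $\phi(z)\equiv z$, in case (ii) take $\phi(z)=z/(1-z)$, and in case (iii) take $\phi(z)=z/(1-z)^2$. In each case the membership ``$f\in\mathcal{W}_{H}^{-}(\phi)$'' required by Theorem~\ref{th2.6} is exactly the hypothesis imposed, and the inequality $\RE h(z)/\phi(z)>1/2$ is precisely the stated condition, since $h/\phi$ equals $h(z)/z$, $(1-z)h(z)/z$ and $(1-z)^2h(z)/z$ respectively. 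Thus everything reduces to checking that the analytic function $\mu_1*\phi$ is univalent and convex in the direction of the real axis for these three choices of $\phi$.

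To identify $\mu_1*\phi$ I would reuse the computation from the proof of Theorem~\ref{th2.6}: since $z\mu_1'(z)=z/(1-z)$ is the analytic convolution identity, we get $z(\mu_1*\phi)'(z)=(z\mu_1'(z))*\phi(z)=(z/(1-z))*\phi(z)=\phi(z)$, so $(\mu_1*\phi)'(z)=\phi(z)/z$ and $\mu_1*\phi$ is recovered by integration. This yields $\mu_1*\phi=z$ when $\phi(z)=z$, $\mu_1*\phi=-\log(1-z)$ when $\phi(z)=z/(1-z)$, and $\mu_1*\phi=z/(1-z)$ when $\phi(z)=z/(1-z)^2$. Each of these is a classical convex univalent mapping of $\mathbb{D}$: this is trivial for the identity, and for the other two it follows from $\RE\left(1+z(\mu_1*\phi)''(z)/(\mu_1*\phi)'(z)\right)=\RE\,1/(1-z)>1/2>0$ in $\mathbb{D}$. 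A convex domain meets every line in a (possibly empty) segment, so a convex univalent function is in particular convex in the direction of the real axis; hence Theorem~\ref{th2.6} applies in each case and delivers $\Gamma_1*f=\Lambda_{H}^{+}[f]\in\mathcal{S}_{H}^{0}$, convex in the direction of the real axis.

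There is no real obstacle here: the corollary is just the three most natural instances of Theorem~\ref{th2.6}, and the only points needing a line of justification are the bookkeeping translation of $\RE h/\phi>1/2$ into the three displayed inequalities and the elementary remark that convexity implies convexity in the direction of the real axis. I would therefore present the proof as a short three-case analysis, each case consisting of computing $\mu_1*\phi$, noting its convexity and univalence, rewriting the hypothesis, and invoking Theorem~\ref{th2.6}.
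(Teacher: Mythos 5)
Your proposal is correct and follows exactly the route the paper intends: the corollary is stated there as an immediate specialization of Theorem~\ref{th2.6} to $\phi(z)=z$, $z/(1-z)$, $z/(1-z)^2$, with $\mu_1*\phi$ equal to $z$, $-\log(1-z)$, $z/(1-z)$ respectively, each convex univalent and hence convex in the direction of the real axis. One tiny slip: for $\phi(z)=z/(1-z)^2$ the convexity quotient of $\mu_1*\phi=z/(1-z)$ is $\RE\bigl(1+z(\mu_1*\phi)''/(\mu_1*\phi)'\bigr)=\RE\,\frac{1+z}{1-z}$, not $\RE\,\frac{1}{1-z}$, but this is still positive in $\mathbb{D}$, so the argument is unaffected.
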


Suppose that $f=h+\overline{g} \in \mathcal{K}_{H}^{0}$. Then $|h(z)|>|g(z)|$ for all $z \in \mathbb{D}\backslash \{0\}$ by \cite[Corollary 5.8]{cluniesheilsmall}. If, in addition, $h(z)-g(z)=z$ then $|h(z)|>|h(z)-z|$ which imply that $|h(z)/z|>|h(z)/z-1|$ for all $z \in \mathbb{D}\backslash \{0\}$. This shows that $\RE h(z)/z>1/2$ for $z \in \mathbb{D}\backslash \{0\}$. The inequality $\RE h(z)/z>1/2$ is obviously true if $z=0$. Therefore Corollary \ref{cor2.8}(i) gives

\begin{corollary}\label{cor2.9}
If $f \in \mathcal{W}_{H}^{-}(z) \cap \mathcal{K}_{H}^{0}$ then $\Lambda_{H}^{+}[f] \in S_{H}^{0}$ and is convex in the direction of real axis.
\end{corollary}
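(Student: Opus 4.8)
The plan is to show that $f$ already satisfies the single hypothesis of Corollary \ref{cor2.8}(i) and then quote that corollary; equivalently one applies Theorem \ref{th2.6} with $\phi(z)=z$, using that $\Lambda_{H}^{+}[f]=f*\Gamma_1$ with $\Gamma_1=\mu_1+\overline{\nu}_1$ as in \eqref{eq2.1}, and that $\mu_1*z=z$ is trivially univalent and convex in the direction of the real axis. So the whole task reduces to verifying the inequality $\RE h(z)/z>1/2$ for all $z\in\mathbb{D}$.

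First I would use the assumption $f=h+\overline{g}\in\mathcal{K}_{H}^{0}$: since $f$ maps $\mathbb{D}$ onto a convex domain, the Clunie--Sheil-Small estimate \cite[Corollary 5.8]{cluniesheilsmall} gives the strict inequality $|g(z)|<|h(z)|$ for every $z\in\mathbb{D}\setminus\{0\}$. Next I would bring in the other hypothesis, $f\in\mathcal{W}_{H}^{-}(z)$, i.e.\ $h-g=z$, so that $g=h-z$. Substituting this into the previous inequality and dividing by $|z|$ yields $|h(z)/z-1|<|h(z)/z|$ on $\mathbb{D}\setminus\{0\}$. Since for a complex number $w$ one has $|w-1|<|w|$ precisely when $\RE w>1/2$, this gives $\RE h(z)/z>1/2$ for $z\in\mathbb{D}\setminus\{0\}$; at $z=0$ the quotient equals $h'(0)=1>1/2$ by the normalization (indeed $h(z)/z=1+a_2 z+\cdots$), so the inequality holds on all of $\mathbb{D}$. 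Corollary \ref{cor2.8}(i) then delivers $\Lambda_{H}^{+}[f]\in\mathcal{S}_{H}^{0}$, convex in the direction of the real axis.

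I do not expect a genuine obstacle here: the argument is a one-line substitution $g=h-z$ combined with a classical growth bound for convex harmonic mappings and the elementary half-plane characterization of the inequality $|w-1|<|w|$. The only point needing a moment of care is the value of the quotient at $z=0$, which is covered by the normalization $h'(0)=1$.
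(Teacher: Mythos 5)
Your argument is correct and is essentially the paper's own proof: both use the Clunie--Sheil-Small estimate $|g(z)|<|h(z)|$ for convex harmonic mappings, substitute $g=h-z$ to get $|h(z)/z-1|<|h(z)/z|$, conclude $\RE h(z)/z>1/2$ (with the trivial check at $z=0$), and then invoke Corollary \ref{cor2.8}(i). No gaps.
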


\begin{example}\label{ex2.10}
The function $f(z)=z+z^2/8+\overline{z}^2/8 \in \mathcal{W}_{H}^{-}(z)$ is sense-preserving and satisfies
\[\frac{\partial}{\partial \theta}\left(\arg \left\{\frac{\partial}{\partial \theta}f(r e^{i \theta})\right\}\right)>0,\quad 0\leq\theta<2\pi,\quad 0<r<1.\]
This shows that $f$ is fully convex in $\mathbb{D}$ (see \cite{mocanu,sumit2}) and hence $f \in \mathcal{K}_{H}^{0}$. By Corollary \ref{cor2.9}, the convolution $(\Gamma_1*f)(z)=z+z^2/16+\overline{z}^2/16$ is univalent in $\mathbb{D}$ and convex in the direction of real axis (see Figure \ref{fig4}).
\begin{figure}[here]
\begin{center}
  \subfigure[$f$]{\includegraphics[width=2.5in,height=1.8in]{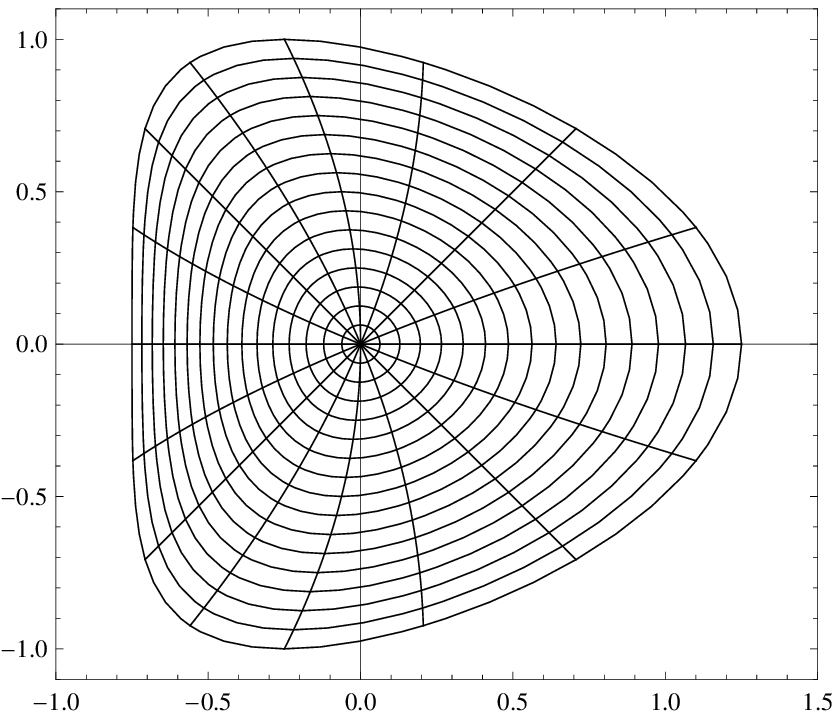}}\hspace{5pt}
  \subfigure[$\Gamma_1*f$]{\includegraphics[width=2.5in,height=1.8in]{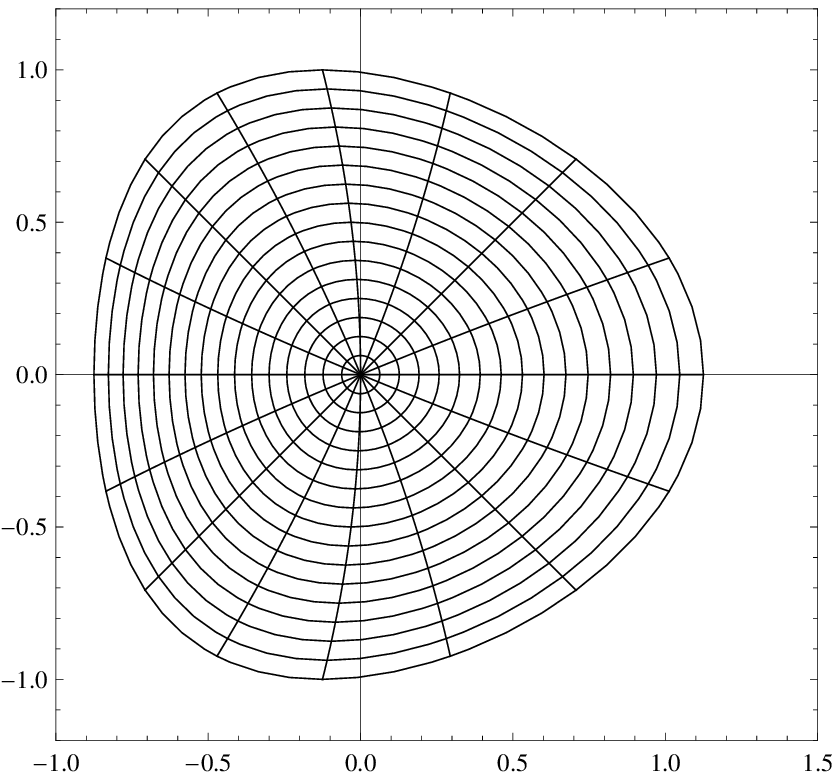}}
  \caption{Images of the function $f(z)=z+z^2/8+\overline{z}^2/8$ and its convolution with $\Gamma_1$.}\label{fig4}
\end{center}
\end{figure}
\end{example}

\begin{corollary}\label{cor2.11}
If $f=h+\overline{g} \in \mathcal{W}_{H}^{-}(z)$ with $h \in \mathcal{K}$, then $\Lambda_{H}^{+}[f] \in S_{H}^{0}$ and is convex in the direction of real axis.
\end{corollary}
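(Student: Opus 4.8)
The plan is to reduce everything to Corollary~\ref{cor2.8}(i). Since $f=h+\overline{g}\in\mathcal{W}_{H}^{-}(z)$ is already part of the hypothesis, that corollary will deliver the conclusion — $\Lambda_{H}^{+}[f]\in\mathcal{S}_{H}^{0}$ and convex in the direction of the real axis — as soon as we verify its one remaining requirement, namely $\RE\{h(z)/z\}>1/2$ for all $z\in\mathbb{D}$. So the whole argument hinges on the purely analytic implication
\[
h\in\mathcal{K}\ \Longrightarrow\ \RE\{h(z)/z\}>\tfrac12\quad(z\in\mathbb{D}),
\]
which is classical: it is precisely one half of the Marx--Strohh\"acker theorem for conformal convex maps.

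If one wants this step to be self-contained, the cleanest route is subordination. The function $k(z)=1/(1-z)$ is univalent and maps $\mathbb{D}$ onto the convex half-plane $\{\RE w>1/2\}$ (the image of $|z|=1$ is the line $\RE w=1/2$ and $k(0)=1$). By the Brickman--MacGregor--Wilken description of the extreme points of the closed convex hull of $\mathcal{K}$, namely the half-plane maps $z/(1-xz)$ with $|x|=1$, each extreme point satisfies $\bigl(z/(1-xz)\bigr)/z=1/(1-xz)\prec k$; since $k$ is convex univalent, this subordination passes to every function in the closed convex hull of $\{h(z)/z:h\in\mathcal{K}\}$, in particular to $h(z)/z$ for any $h\in\mathcal{K}$. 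Thus $h(z)/z\prec 1/(1-z)$, and reading off the image of $k$ gives $\RE\{h(z)/z\}>1/2$. (A shorter variant: $h$ convex $\Rightarrow$ $h$ is starlike of order $1/2$ $\Rightarrow$ $h(z)/z\prec(1-z)^{-1}$.) Once this inequality is in hand, Corollary~\ref{cor2.8}(i) closes the proof; equivalently one may quote Theorem~\ref{th2.6} with $\phi(z)\equiv z$, since then $\mu_{1}*\phi=z$ is trivially univalent and convex in the direction of the real axis and the required $\RE\{h(z)/\phi(z)\}>1/2$ is exactly what we just checked, while $\Lambda_{H}^{+}[f]=f*\Gamma_{1}$.

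There is essentially no real obstacle beyond the classical convexity lemma above; the rest is bookkeeping with results already in hand. The one point that deserves a word of caution is that one should \emph{not} attempt to deduce this corollary from Corollary~\ref{cor2.9}: assuming $h\in\mathcal{K}$ and $h-g=z$ does not force $f=h+\overline{g}$ to lie in $\mathcal{K}_{H}^{0}$, and indeed $f$ need not even be sense-preserving — take $h(z)=z/(1-z)$, so that $g(z)=z^{2}/(1-z)$ and $f$ is the harmonic convolution identity $e$, which is not in $\mathcal{S}_{H}^{0}$, yet the conclusion still holds since $\RE\{h(z)/z\}=\RE\{1/(1-z)\}>1/2$. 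Hence the reduction must pass through the inequality on $h(z)/z$ directly, as above.
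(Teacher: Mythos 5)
Your proposal is correct and follows essentially the same route as the paper: quote the Marx--Strohh\"acker theorem to get $\RE\{h(z)/z\}>1/2$ from $h\in\mathcal{K}$, then invoke Corollary~\ref{cor2.8}(i) (equivalently Theorem~\ref{th2.6} with $\phi(z)\equiv z$). The extra subordination sketch and the cautionary example showing one cannot instead route through Corollary~\ref{cor2.9} are sound but not needed beyond the paper's two-line argument.
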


\begin{proof}
Since $h\in \mathcal{K}$, $\RE h(z)/z>1/2$ for $z \in \mathbb{D}$ by the well-known Marx Strohh\"{a}cker theorem \cite[Theorem 2.6(a), p. 57]{monograph}. By Corollary \ref{cor2.8}(i), we obtain the desired result.
\end{proof}

Since $\mu_1 \in \mathcal{K}$, Corollary \ref{cor2.11} directly shows that the convolution $\Gamma_1*\Gamma_1 \in \mathcal{S}_{H}^{0}$ and is convex in the direction of real axis, where $\Gamma_1=\mu_1+\overline{\nu}_1\in \mathcal{W}_{H}^{-}(z)$ is given by \eqref{eq2.1}. Parts $(ii)$ and $(iii)$ of Corollary \ref{cor2.8} are illustrated by the following examples.

\begin{example}\label{ex2.12}
The harmonic mapping
\[F(z)=U(z)+\overline{V(z)},\quad  U(z):=\frac{z-\frac{1}{2}z^{2}}{(1-z)^2},\quad  V(z):=\frac{\frac{1}{2}z^2}{(1-z)^2}\]
constructed by shearing the conformal mapping $l(z)=z/(1-z)$ in the direction of real axis with dilatation $w_F(z)=z$, belongs to $\mathcal{W}_{H}^{-}(z/(1-z))$ and $F(\mathbb{D})=\{u+iv:v^2>-(u+1/4)\}$ (see Figure \ref{fig5}(a)). Recently, the authors \cite{sumit4} calculated the radius of starlikeness and convexity of the function $F$. Note that $\RE (1-z)U(z)/z=\RE (2-z)/(2(1-z))>3/4$ for all $z \in \mathbb{D}$. By Corollary \ref{cor2.8}(ii), the convolution
\[(\Gamma_1*F)(z)=\RE \frac{z}{1-z}-i \arg\{1-z\},\quad z\in \mathbb{D}\]
is univalent and convex in the direction of real axis (see Figure \ref{fig5}(b)).

\begin{figure}[here]
\begin{center}
  \subfigure[$F$]{\includegraphics[width=2.5in,height=2.5in]{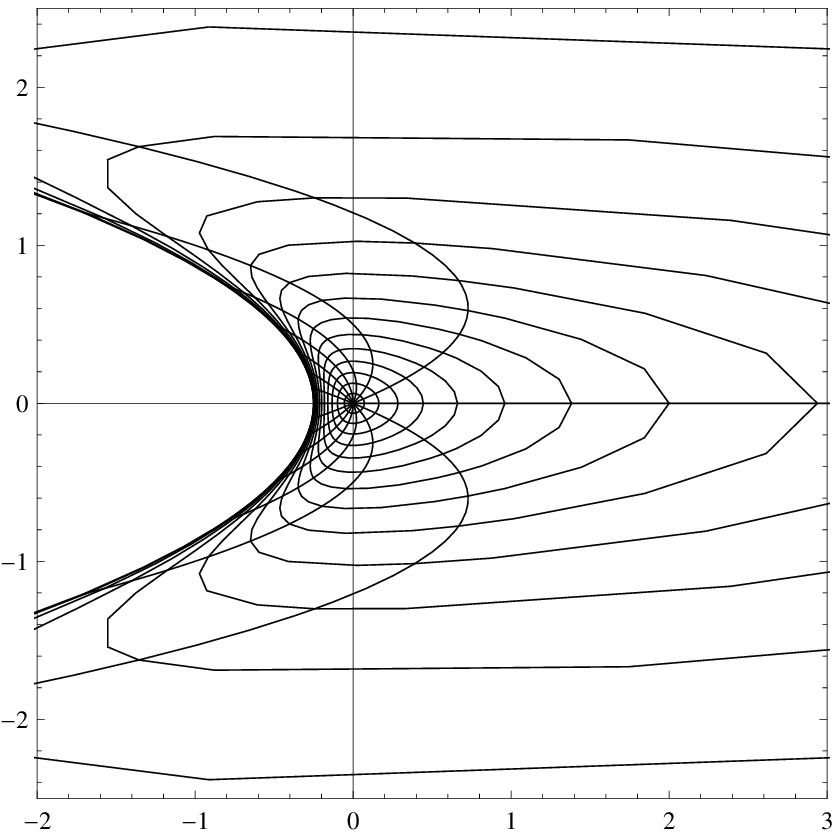}}\hspace{5pt}
  \subfigure[$\Gamma_1*F$]{\includegraphics[width=2.5in,height=2.5in]{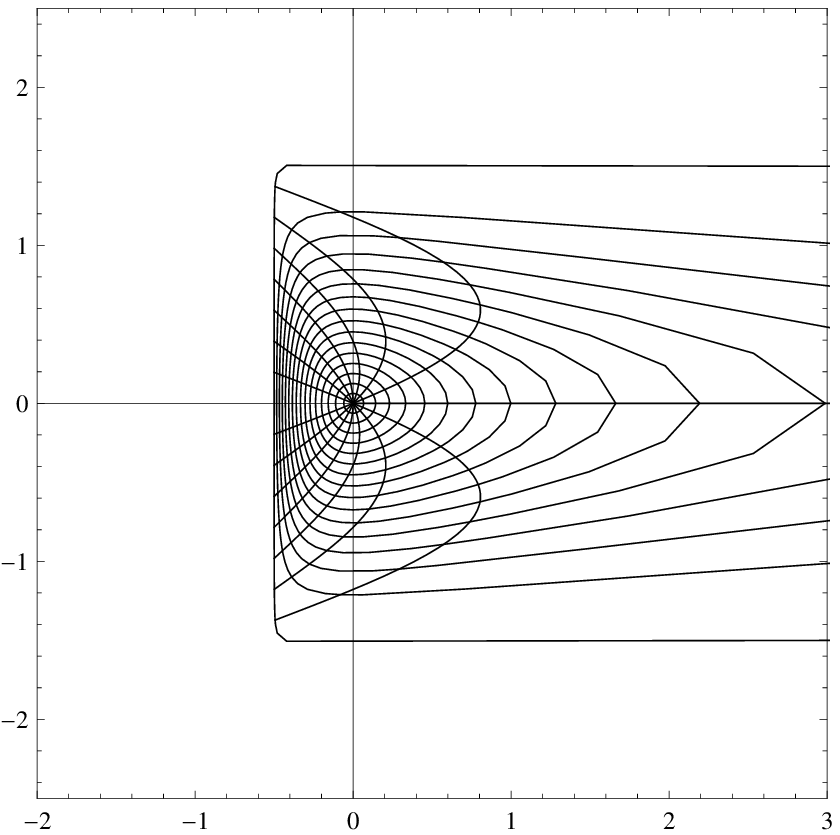}}
\caption{Images of the function $F$ and its convolution with $\Gamma_1$}\label{fig5}
\end{center}
\end{figure}
\end{example}

\begin{example}\label{ex2.13}
The harmonic Koebe function
\[K(z)=H(z)+\overline{G(z)}, \quad H(z):=\frac{z-\frac{1}{2}z^2+\frac{1}{6}z^3}{(1-z)^3},\quad G(z):=\frac{\frac{1}{2}z^2+\frac{1}{6}z^3}{(1-z)^3},\]
constructed by shearing the Koebe function $k(z)=z/(1-z)^2$ in the direction of real axis with dilatation $w_K(z)=z$, maps the unit disk $\mathbb{D}$ onto the slit-plane $\mathbb{C}\backslash(-\infty,-1/6]$. Note that $K \in \mathcal{W}_{H}^{-}(z/(1-z)^2)$ and $\RE (1-z)^2 H(z)/z>1/2$ so the convolution
\[\Lambda_{H}^{+}[K]=(\Gamma_1*K)(z)=\frac{2}{3}\frac{z}{(1-z)^2}+\frac{1}{3}i \IM \frac{z-3z^2}{(1-z)^2}-\frac{1}{3}\log |1-z|,\quad z\in \mathbb{D}\]
is univalent and convex in the direction of real axis (see Figure \ref{fig6}) by Corollary \ref{cor2.8}(iii).
\begin{figure}
\begin{center}
  \subfigure{\includegraphics[width=2.8in,height=3 in]{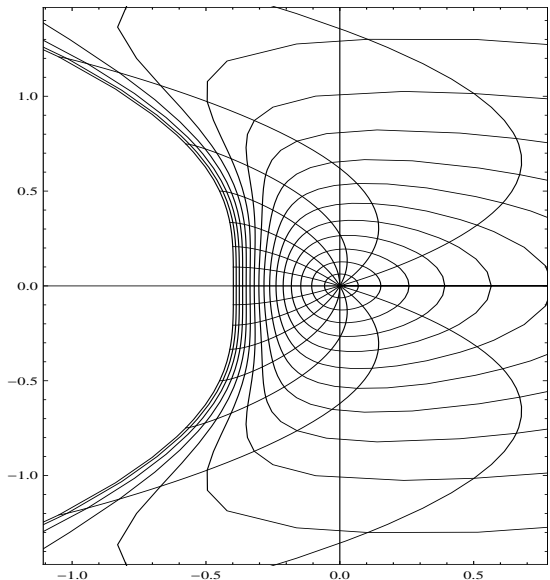}}
  \caption{Image of the convolution of $\Gamma_1$ with the harmonic Koebe function $K$.}\label{fig6}
\end{center}
  \end{figure}
\end{example}

Analogous to Theorem \ref{th2.1}, the last theorem of this section determines the conditions under which the harmonic convolution $f_1*f_2$ is univalent and convex in one direction if $f_1 \in \mathcal{W}_{H}^{+}(z)$ and $f_2 \in \mathcal{W}_{H}^{+}(\phi)$. Its proof follows by an easy modification of the proof of Theorem \ref{th2.1}.

\begin{theorem}\label{th2.14}
Let $f_1=h_1+\overline{g}_1\in \mathcal{W}_{H}^{+}(z)$ and $f_2=h_2+\overline{g}_2 \in \mathcal{W}_{H}^{+}(\phi)$. Then $f_1*f_2 \in \mathcal{W}_{H}^{-}(h_1*\phi)$. Moreover, if $h_1*\phi \in \mathcal{S}$ and is convex in the direction of real axis with $\RE(h_1*h_2)'/(h_1*\phi)'>1/2$ in $\mathbb{D}$ then $f_1*f_2 \in S_{H}^{0}$ and is convex in the direction of real axis.
\end{theorem}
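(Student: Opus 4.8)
The plan is to follow the proof of Theorem~\ref{th2.1} almost verbatim, interchanging the roles of $h-g$ and $h+g$. First I would extract the two convolution identities produced by the fact that $z*\psi=z$ for every normalized analytic $\psi$. Since here $h_1+g_1=z$ and $h_2+g_2=\phi$, one writes
\[
z=z*(h_2-g_2)=(h_1+g_1)*(h_2-g_2)=(h_1*h_2)-(h_1*g_2)+(g_1*h_2)-(g_1*g_2)
\]
and
\[
(h_1-g_1)*\phi=(h_1-g_1)*(h_2+g_2)=(h_1*h_2)+(h_1*g_2)-(g_1*h_2)-(g_1*g_2).
\]
Adding these gives $(h_1*h_2)-(g_1*g_2)=\frac{1}{2}\bigl[z+(h_1-g_1)*\phi\bigr]$; since $h_1-g_1=2h_1-(h_1+g_1)=2h_1-z$, we have $(h_1-g_1)*\phi=2(h_1*\phi)-z*\phi=2(h_1*\phi)-z$, and therefore $(h_1*h_2)-(g_1*g_2)=h_1*\phi$. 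As $f_1*f_2=(h_1*h_2)+\overline{(g_1*g_2)}$, this is precisely the claim $f_1*f_2\in\mathcal{W}_{H}^{-}(h_1*\phi)$.

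For the second assertion I would apply Lemma~\ref{lem1} to $f_1*f_2$. Put $h=h_1*h_2$ and $g=g_1*g_2$; by the above, $h-g=h_1*\phi$, which by hypothesis is a conformal univalent map convex in the direction of the real axis, so by Lemma~\ref{lem1} it suffices to verify that $f_1*f_2$ is sense-preserving, i.e.\ that its dilatation $w=g'/h'$ satisfies $|w|<1$ in $\mathbb{D}$, equivalently $\RE\bigl((1+w)/(1-w)\bigr)>0$. Using $h'-g'=(h_1*\phi)'$ and $h'+g'=2h'-(h_1*\phi)'$ one gets
\[
\RE\frac{1+w}{1-w}=\RE\frac{h'+g'}{h'-g'}=2\,\RE\frac{(h_1*h_2)'}{(h_1*\phi)'}-1,
\]
which is positive by assumption; the same inequality forces $(h_1*h_2)'$ to be zero-free (as is $(h_1*\phi)'$, $h_1*\phi$ being univalent), so $w$ is a genuine analytic function and the reasoning is valid. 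Lemma~\ref{lem1} then yields that $f_1*f_2$ is univalent and convex in the direction of the real axis; being sense-preserving and univalent, $f_1*f_2\in\mathcal{S}_{H}^{0}$.

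There is no serious obstacle here: every step is an algebraic manipulation of Hadamard products plus a single invocation of Lemma~\ref{lem1}. The only points needing a little care are the sign bookkeeping when translating $\mathcal{W}_{H}^{+}$-data into $\mathcal{W}_{H}^{-}$-data---in particular that $(h_1-g_1)*\phi$ simplifies to $2(h_1*\phi)-z$ and not to $2(h_1*\phi)-\phi$---and the observation that the hypothesis $\RE(h_1*h_2)'/(h_1*\phi)'>1/2$ simultaneously delivers both $|w|<1$ and the analyticity of $w$.
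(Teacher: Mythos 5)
Your proof is correct and is exactly the ``easy modification of the proof of Theorem \ref{th2.1}'' that the paper invokes for Theorem \ref{th2.14}: the same convolution identities with the roles of $h\pm g$ interchanged, the same reduction of sense-preservation to $\RE\bigl((h_1*h_2)'/(h_1*\phi)'\bigr)>1/2$, and the same application of Lemma \ref{lem1}. The sign bookkeeping (in particular $(h_1-g_1)*\phi=2(h_1*\phi)-z$) is handled correctly, so nothing is missing.
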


\section{Univalence and Convexity in the direction of imaginary axis}\label{sec3}
In this section, we shall investigate the properties of the convolution $f_1*f_2$ if $f_1 \in \mathcal{W}_{H}^{-}(z)$ and $f_2 \in \mathcal{W}_{H}^{+}(\phi)$.

\begin{theorem}\label{th3.1}
Let $f_1\in \mathcal{W}_{H}^{-}(z)$ and $f_2 \in \mathcal{W}_{H}^{+}(\phi)$. Then
\begin{itemize}
  \item [(i)] $f_1*f_2 \in \mathcal{W}_{H}^{+}(h_1*\phi)$;
  \item [(ii)] Further, if the analytic function $h_1*\phi$ is univalent and convex in the direction of imaginary axis and $\RE(h_1*h_2)'/(h_1*\phi)'>1/2$ in $\mathbb{D}$, then $f_1*f_2 \in S_{H}^{0}$ and is convex in the direction of imaginary axis.
\end{itemize}
\end{theorem}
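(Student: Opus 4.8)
The plan is to mirror the argument of Theorem \ref{th2.1}, replacing the shear identity $h-g=\phi$ by $h+g=\phi$ for $f_2$, and tracking the sign changes this introduces. For part (i), I would start from the convolution identity $z = z*(h_2-g_2) = (h_1-g_1)*(h_2-g_2)$ (using $h_1-g_1=z$ and, for $f_2\in\mathcal{W}_H^+(\phi)$, that $h_2+g_2=\phi$, hence $z*(h_2-g_2)$ still equals $z$ since $h_2-g_2$ is normalized). Expanding gives
\[
z = (h_1*h_2) - (h_1*g_2) - (g_1*h_2) + (g_1*g_2).
\]
Next, compute $(h_1+g_1)*\phi = (h_1+g_1)*(h_2+g_2) = (h_1*h_2) + (h_1*g_2) + (g_1*h_2) + (g_1*g_2)$. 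Adding these two expressions, the cross terms cancel, and I obtain $(h_1*h_2) + (g_1*g_2) = \tfrac12[z + (h_1+g_1)*\phi]$. Using $h_1-g_1=z$ so that $(h_1+g_1)*\phi = 2(h_1*\phi) - z*\phi = 2(h_1*\phi)-\phi$, this simplifies to $(h_1*h_2)+(g_1*g_2) = h_1*\phi$. Thus the analytic part plus the co-analytic part of $f_1*f_2$ equals $h_1*\phi$, which is exactly the statement $f_1*f_2\in\mathcal{W}_H^+(h_1*\phi)$.

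For part (ii), the hypothesis now gives that $(h_1*h_2)+(g_1*g_2) = h_1*\phi$ is univalent and convex in the direction of the imaginary axis, so by Lemma \ref{lem1} it suffices to show $f_1*f_2$ is sense-preserving, i.e. $|w_{f_1*f_2}| = |(g_1*g_2)'/(h_1*h_2)'| < 1$ in $\mathbb{D}$. I would again pass to the equivalent condition $\RE\big((1+w_{f_1*f_2})/(1-w_{f_1*f_2})\big)>0$, but because the relevant shear is now a sum rather than a difference, the roles swap: here
\[
\RE\left(\frac{1+w_{f_1*f_2}}{1-w_{f_1*f_2}}\right) = \RE\frac{(h_1*h_2)'+(g_1*g_2)'}{(h_1*h_2)'-(g_1*g_2)'},
\]
and the numerator $(h_1*h_2)'+(g_1*g_2)' = (h_1*\phi)'$ is what we control directly. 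Writing the numerator as $(h_1*\phi)'$ and the denominator as $2(h_1*h_2)' - (h_1*\phi)'$, a short manipulation — divide numerator and denominator by $(h_1*\phi)'$ — yields
\[
\RE\left(\frac{1+w_{f_1*f_2}}{1-w_{f_1*f_2}}\right) = \RE\frac{1}{\,2\,(h_1*h_2)'/(h_1*\phi)' - 1\,},
\]
and since $\RE\big((h_1*h_2)'/(h_1*\phi)'\big)>1/2$ forces $2(h_1*h_2)'/(h_1*\phi)'-1$ to have positive real part, its reciprocal also has positive real part. Hence $f_1*f_2$ is sense-preserving, and Lemma \ref{lem1} completes the proof.

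The only genuine subtlety — not really an obstacle, but the place to be careful — is getting the cross-term bookkeeping exactly right in part (i): with $h_1-g_1=z$ but $h_2+g_2=\phi$, one must use the right pairing of shears so that the cross terms cancel on addition rather than subtraction, which is what produces a $\mathcal{W}_H^+$ (sum) conclusion here versus the $\mathcal{W}_H^-$ (difference) conclusion in Theorem \ref{th2.1}. A secondary point worth a sentence is justifying $z*(h_2-g_2)=z$: this holds because $h_2-g_2$ is an analytic function with $(h_2-g_2)'(0)=1$, so the convolution identity $l*\psi = \psi$ for $l(z)=z/(1-z)$ — or rather the neutrality of $z$ under Hadamard product for normalized series — applies. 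Everything else is a routine rearrangement, and the sense-preserving step is a verbatim analogue of the one in Theorem \ref{th2.1} with the numerator and denominator of the dilatation Cayley transform interchanged.
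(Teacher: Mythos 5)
Your argument is essentially identical to the paper's: part (i) comes from adding the expansions of $z=(h_1-g_1)*(h_2-g_2)$ and $(h_1+g_1)*\phi=(h_1+g_1)*(h_2+g_2)$ so the cross terms cancel and $h_1*h_2+g_1*g_2=h_1*\phi$, and part (ii) is the same Cayley-transform computation the paper imports from Theorem \ref{th2.1}, which you in fact carry out in more detail than the paper does. The one blemish is the line $z*\phi=\phi$: since $z$ annihilates every coefficient beyond the first, $z*\phi=z$ (the convolution identity for analytic functions is $z/(1-z)$, not $z$), and it is this correct identity that gives $\tfrac12\left[z+2(h_1*\phi)-z\right]=h_1*\phi$; your stated conclusion is right, so nothing downstream is affected.
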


\begin{proof}
Adding the two identities
\[z=(h_1-g_1)*(h_2-g_2)\quad \mbox{and} \quad (h_1+g_1)*\phi=(h_1+g_1)*(h_2+g_2),\]
we obtain $h_1*h_2+g_1*g_2=h_1*\phi$. This shows that $f_1*f_2 \in \mathcal{W}_{H}^{+}(h_1*\phi)$. Similar to the proof of Theorem \ref{th2.1}, it is easy to see that the condition $\RE(h_1*h_2)'/(h_1*\phi)'>1/2$ is equivalent to $|w_{f_1*f_2}|<1$ in $\mathbb{D}$. By applying Lemma \ref{lem1}, it follows that $f_1*f_2$ is univalent and convex in the direction of imaginary axis.
\end{proof}

Taking $\phi(z)\equiv z$ in Theorem \ref{th3.1}, we have
\begin{corollary}\label{cor3.2}
Let $f_1\in \mathcal{W}_{H}^{-}(z)$ and $f_2 \in \mathcal{W}_{H}^{+}(z)$. Then $f_1*f_2 \in \mathcal{W}_{H}^{+}(z)$ and if $\RE (h_1*h_2)'>1/2$ in $\mathbb{D}$, then $f_1*f_2 \in S_{H}^{0}$ and is convex in the direction of imaginary axis.
\end{corollary}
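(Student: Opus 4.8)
The plan is to obtain Corollary \ref{cor3.2} simply as the special case $\phi(z)\equiv z$ of Theorem \ref{th3.1}; essentially no new argument is needed, and one only has to verify that the hypotheses of Theorem \ref{th3.1} degenerate in the way claimed. So the strategy is: substitute $\phi=z$, simplify $h_1*\phi$ and $(h_1*\phi)'$, and read off both conclusions.

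First I would record the elementary identity $\psi*z=z$, valid for every analytic $\psi$ normalized by $\psi(0)=0$, $\psi'(0)=1$ (this is exactly the fact used at the outset of the proof of Theorem \ref{th2.1}). Applying it with $\psi=h_1$ gives $h_1*\phi=h_1*z=z$. Hence part (i) of Theorem \ref{th3.1} immediately yields $f_1*f_2\in\mathcal{W}_{H}^{+}(h_1*\phi)=\mathcal{W}_{H}^{+}(z)$, which is the first assertion of the corollary.

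For the second assertion I would observe that the analytic function $h_1*\phi=z$ is trivially univalent and convex in the direction of the imaginary axis, and that $(h_1*\phi)'\equiv 1$. Therefore the dilatation condition $\RE\left((h_1*h_2)'/(h_1*\phi)'\right)>1/2$ in Theorem \ref{th3.1}(ii) reduces to precisely $\RE(h_1*h_2)'>1/2$ in $\mathbb{D}$, which is the hypothesis assumed in the corollary. Theorem \ref{th3.1}(ii) then gives $f_1*f_2\in\mathcal{S}_{H}^{0}$ and convex in the direction of the imaginary axis.

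There is no genuine obstacle here; the only point requiring a little care is to keep $z$ (which satisfies $\psi*z=z$) distinct from the convolution identity $z/(1-z)$. Once that is kept straight, the proof is a direct specialization of Theorem \ref{th3.1}, parallel to how Corollary \ref{cor2.2} was derived from Theorem \ref{th2.1}.
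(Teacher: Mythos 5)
Your proposal is correct and is exactly the paper's derivation: the corollary is obtained by setting $\phi(z)\equiv z$ in Theorem \ref{th3.1}, using $h_1*z=z$ so that $h_1*\phi=z$ and $(h_1*\phi)'\equiv 1$. Nothing further is needed.
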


\begin{example}\label{ex3.3}
For $n=2,3,\ldots$, let $q_n=r_n+\overline{s}_n$ be the harmonic mappings of $\mathbb{D}$ with $r_n=z-z^n/n$ and $s_n=z^n/n$. Then $q_n \in \mathcal{W}_{H}^{+}(z)$ are not univalent in $\mathbb{D}$ and
\[p_n*q_n=u_n*r_n+\overline{v_n*s_n}=z-\frac{z^n}{n^2}+\frac{z^n}{n^2}\quad (n=2,3,\ldots)\]
where $p_n=u_n+\overline{v}_n\in \mathcal{W}_{H}^{-}(z)$ are defined in Example \ref{ex2.3}.

\begin{figure}[here]
\begin{center}
  \subfigure[$q_2$]{\includegraphics[width=1.5in,height=2in]{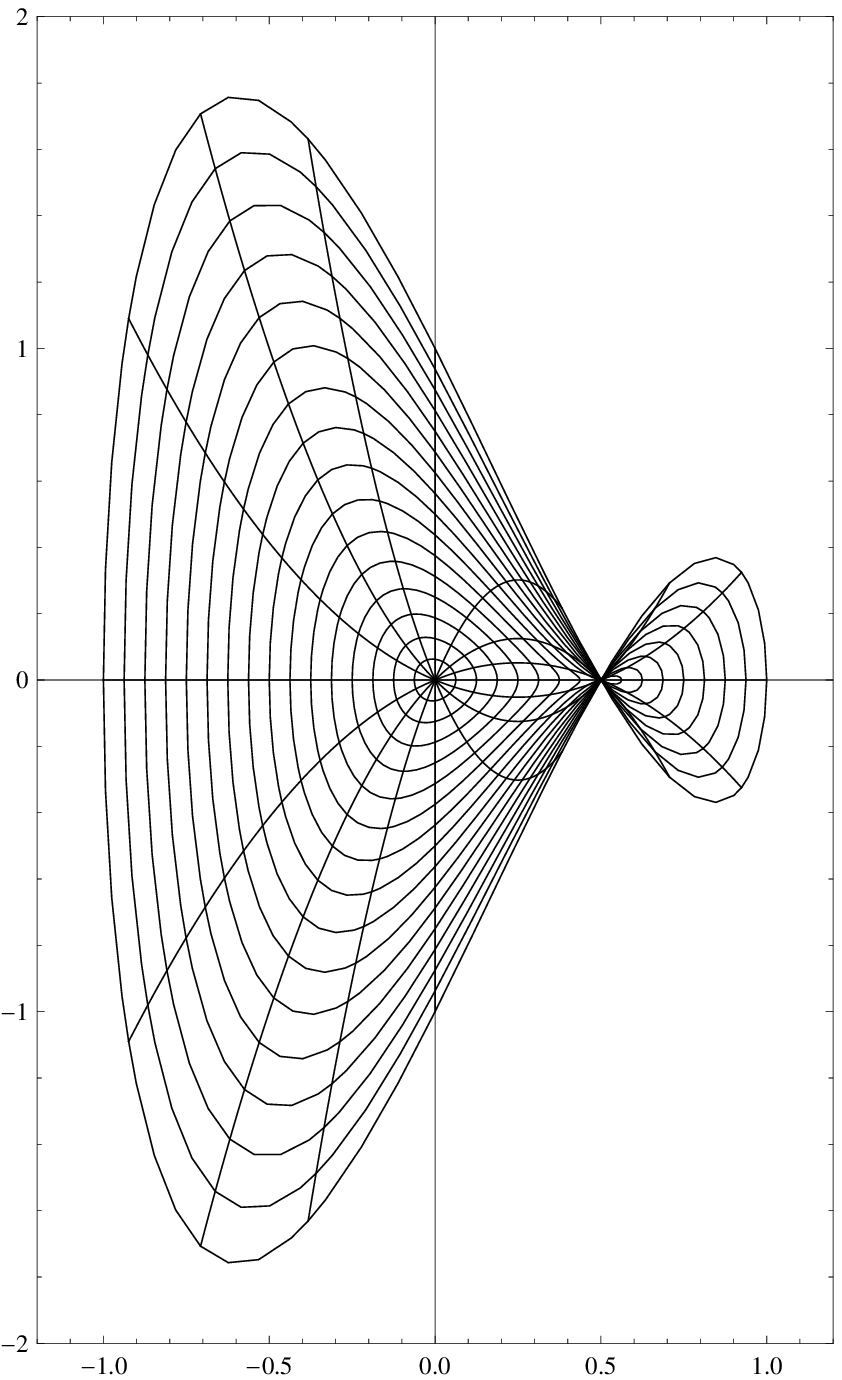}}\hspace{5pt}
  \subfigure[$q_3$]{\includegraphics[width=1.5in,height=2in]{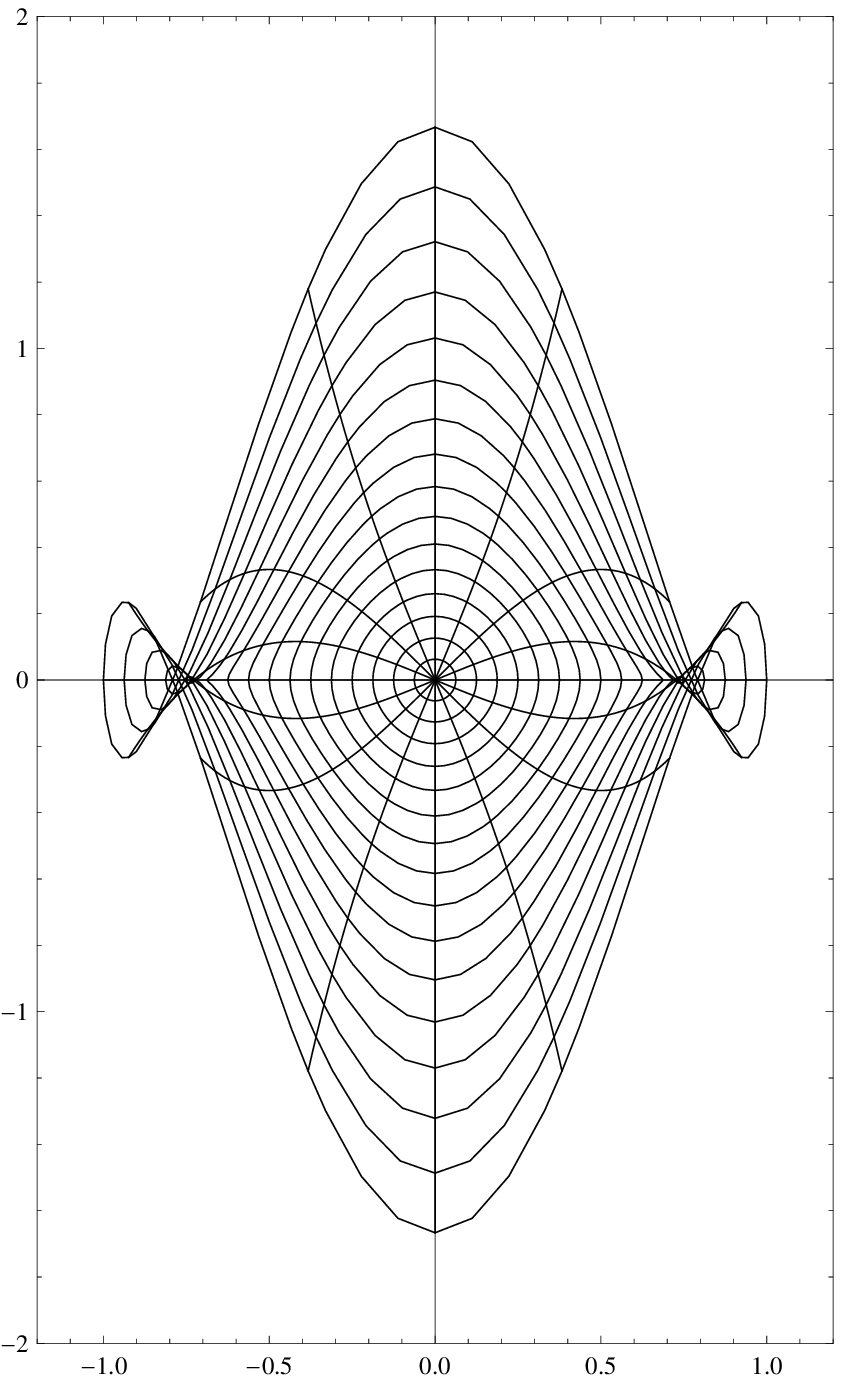}}\hspace{5pt}
  \subfigure[$q_4$]{\includegraphics[width=1.5in,height=2in]{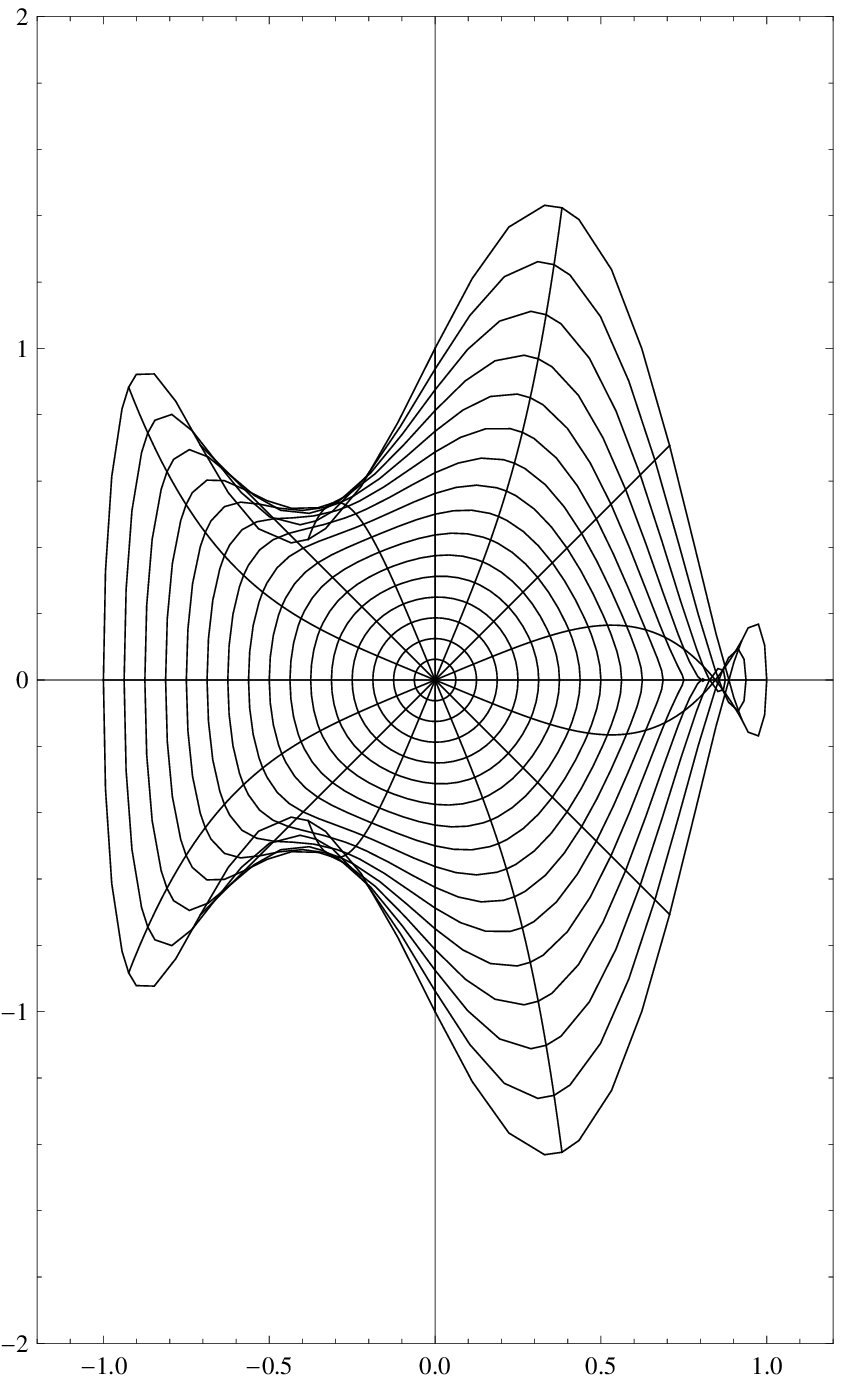}}\hspace{5pt}
  \subfigure[$p_2*q_2$]{\includegraphics[width=1.5in,height=2in]{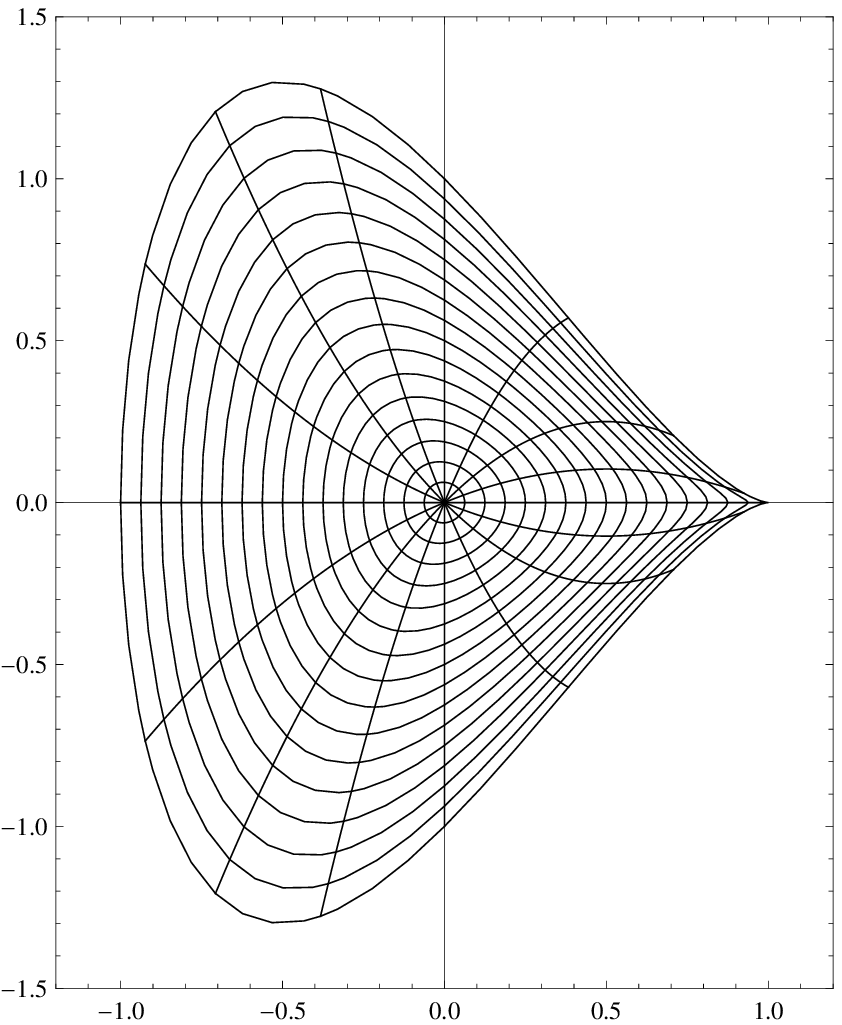}}\hspace{5pt}
  \subfigure[$p_3*q_3$]{\includegraphics[width=1.5in,height=2in]{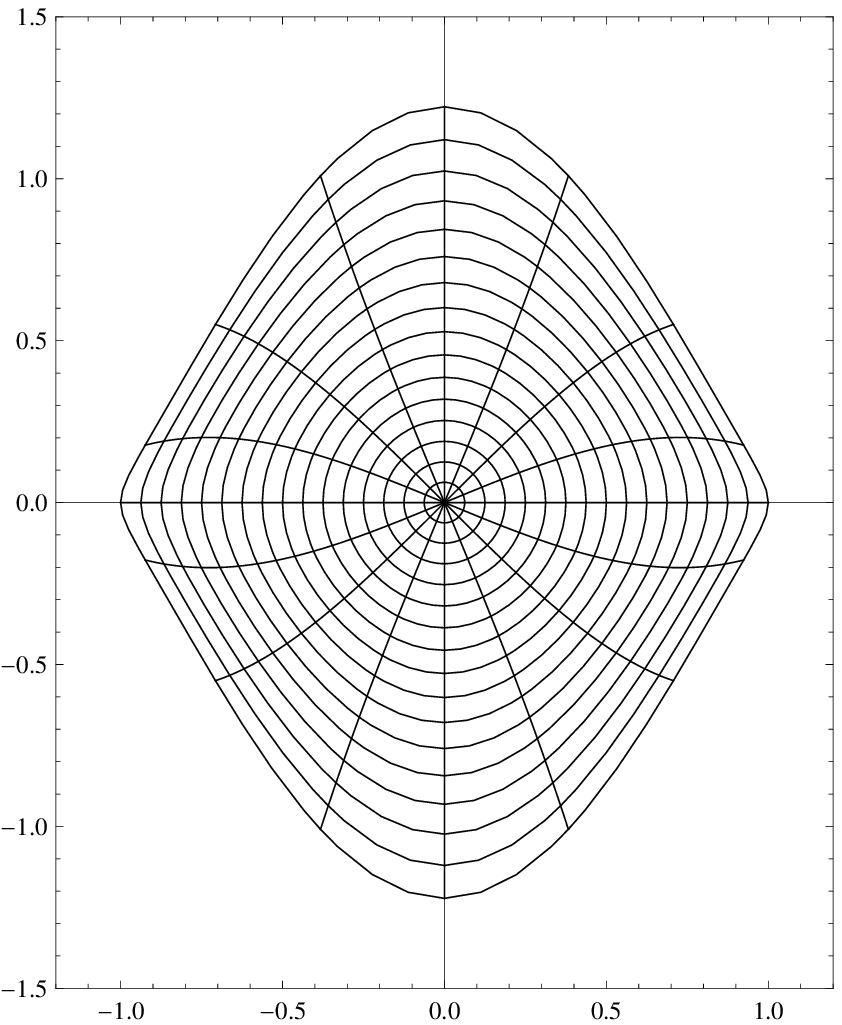}}\hspace{5pt}
  \subfigure[$p_4*q_4$]{\includegraphics[width=1.5in,height=2in]{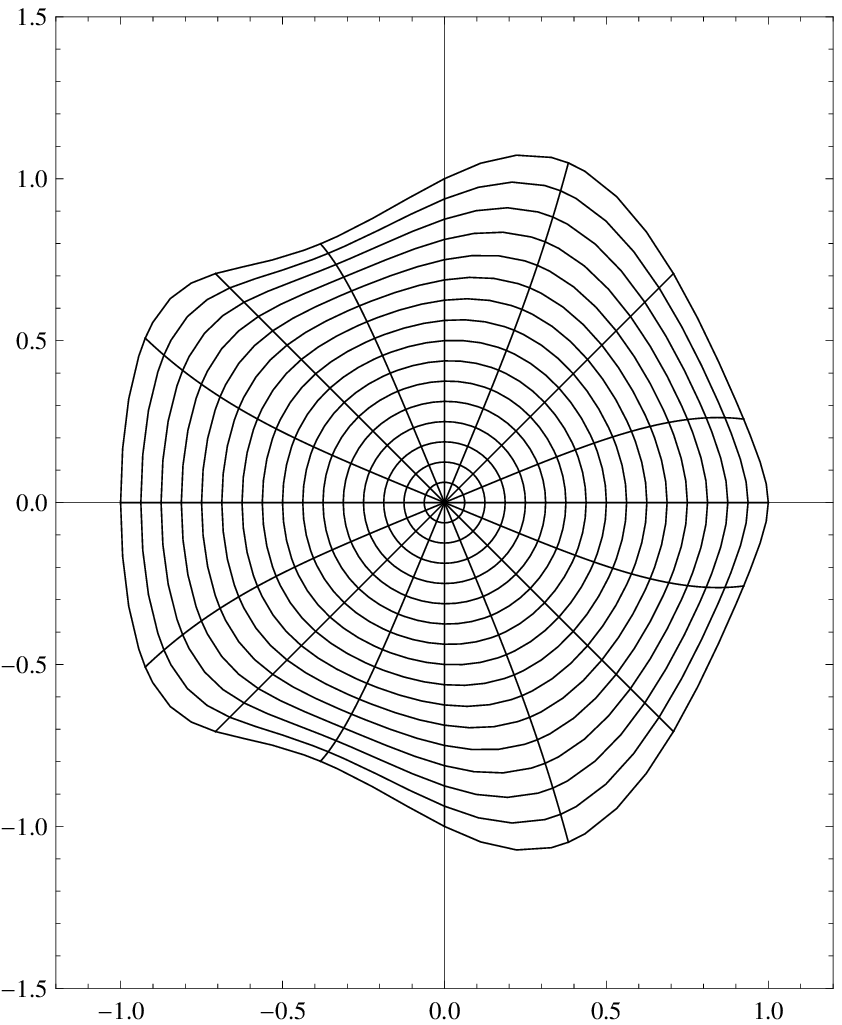}}
  \caption{Images of the functions $q_n(z)=z-z^n/n+\bar{z}^n/n$ and the convolutions $p_n*q_n$ for $n=2,3,4$.}\label{fig7}
\end{center}
\end{figure}

It is easy to see that $\RE (u_n*r_n)'>1/2$ in $\mathbb{D}$ so that the convolutions $p_n*q_n$ $(n=2,3,\ldots)$ are univalent and convex in the direction of imaginary axis, by Corollary \ref{cor3.2}. The images of the unit disk under $q_n$ and $p_n*q_n$ are depicted in Figure \ref{fig7} for $n=2,3,4$.
\end{example}

\begin{example}\label{ex3.4}
For $k=1,2,\ldots$, let $\Psi_k=\gamma_k+\overline{\delta}_k$ be the shears of the identity map in the direction of imaginary axis with dilatation $w_{\Psi_k}(z)=z^k$. Then $\Psi_k \in \mathcal{W}_H^+(z)$ and
\[\gamma_k(z)=z+\sum_{n=1}^{\infty}(-1)^n \frac{z^{nk+1}}{nk+1},\quad \delta_k(z)=\sum_{n=1}^{\infty}(-1)^{n+1}\frac{z^{nk+1}}{nk+1} \quad (z \in \mathbb{D};k=1,2,\ldots).\]
In particular, $\Psi_1(z)=\overline{z}+2i \arg \{1+z\}$ and $\Psi_2(z)=\overline{z}+2i \IM(\arctan z)$.

\begin{figure}[here]
\begin{center}
  \subfigure[$\Psi_1$]{\includegraphics[width=1.5in,height=2in]{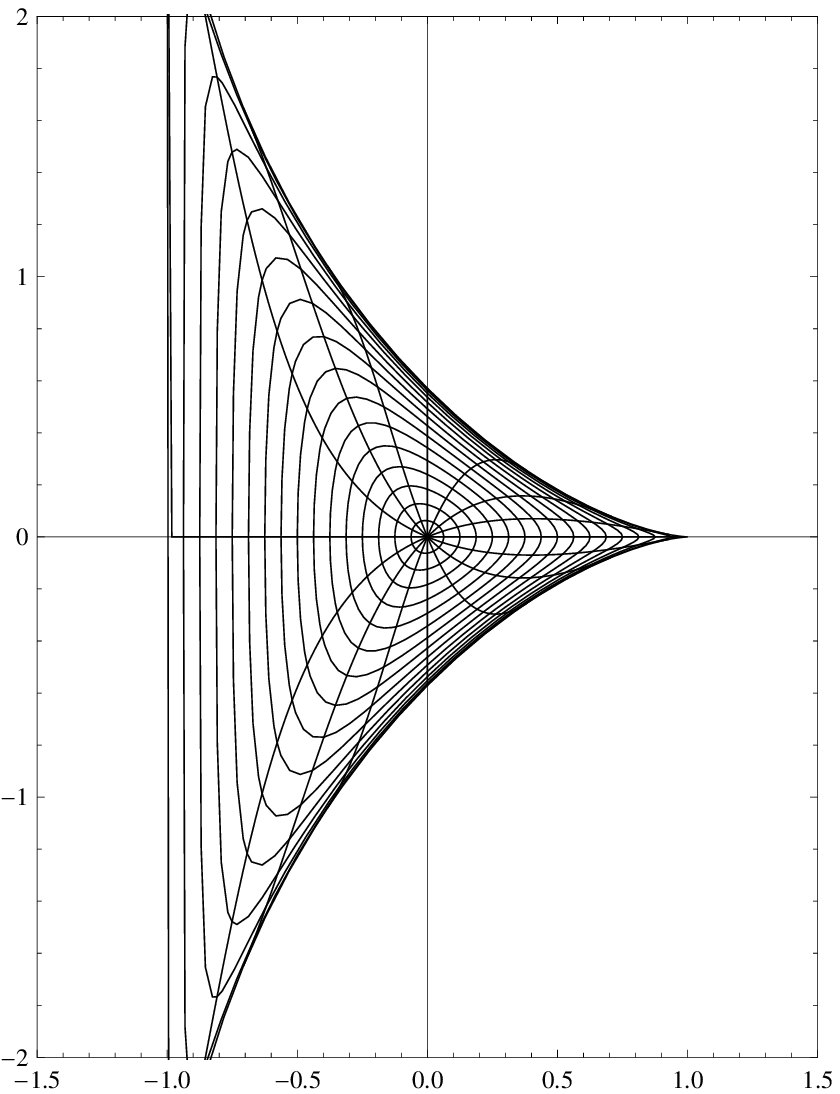}}\hspace{5pt}
  \subfigure[$\Psi_2$]{\includegraphics[width=1.5in,height=2in]{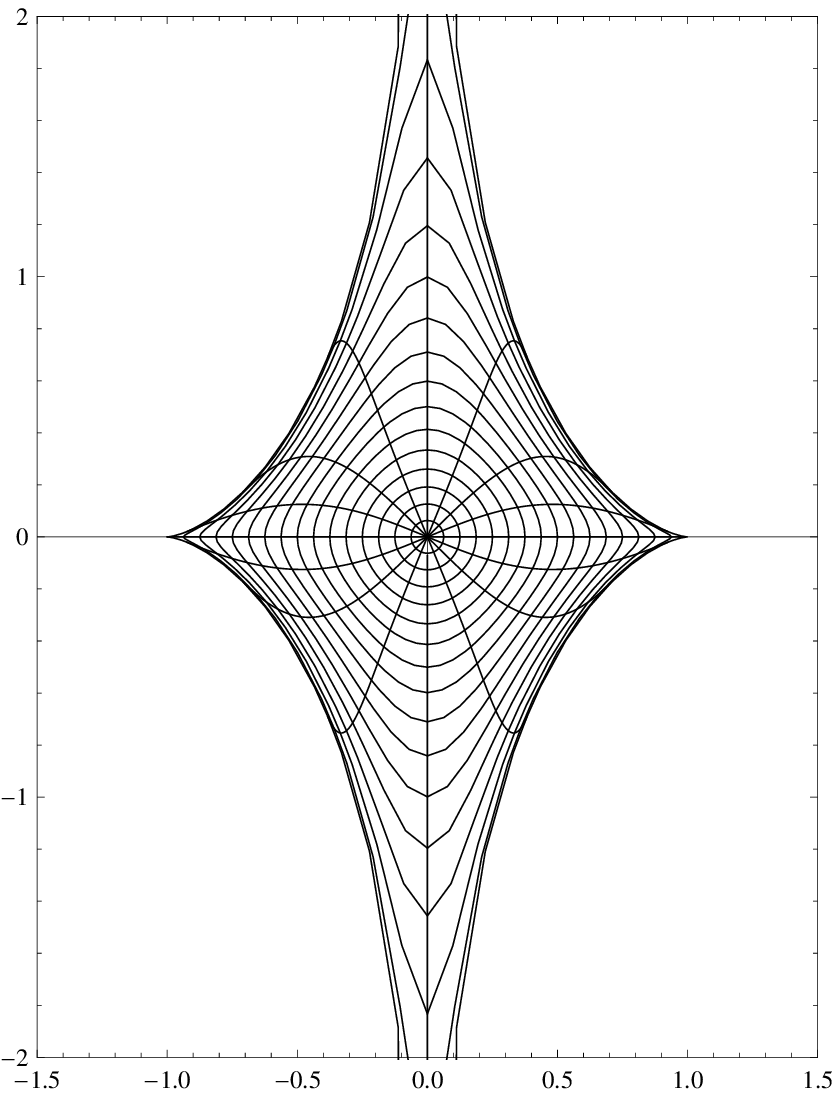}}\hspace{5pt}
  \subfigure[$\Psi_3$]{\includegraphics[width=1.5in,height=2in]{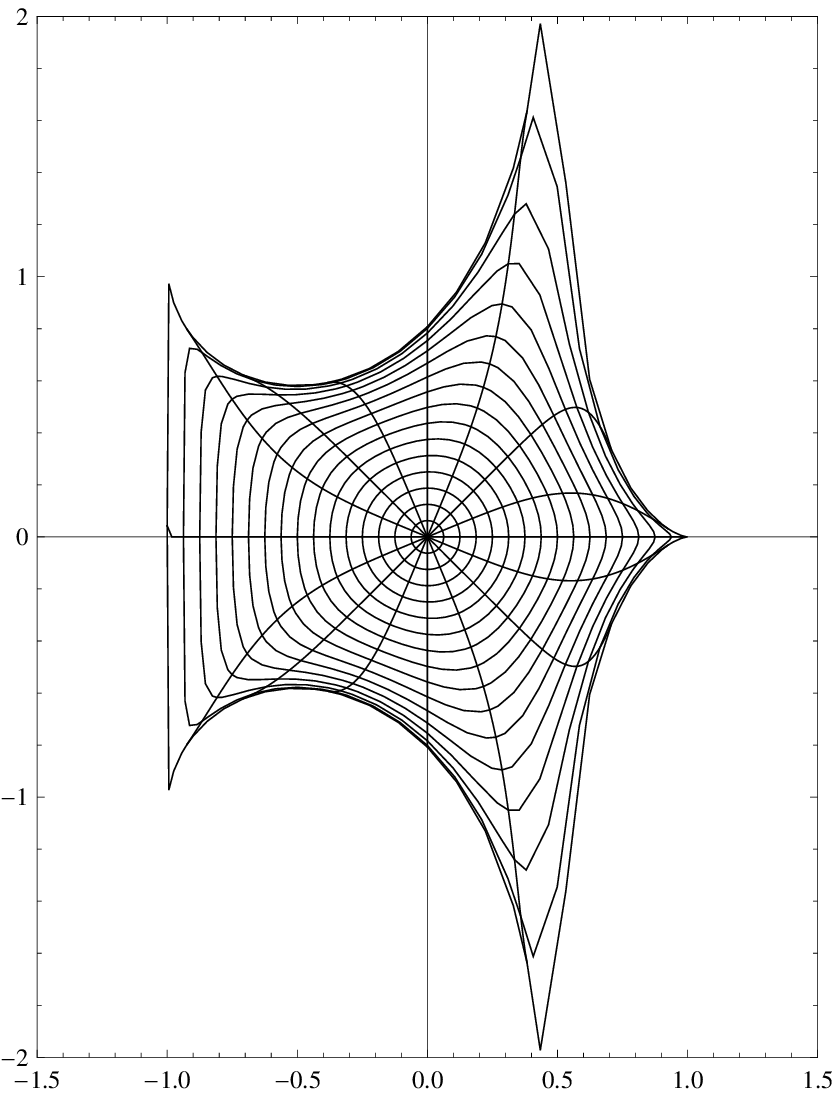}}\hspace{5pt}
  \subfigure[$\Gamma_1*\Psi_1$]{\includegraphics[width=1.5in,height=2in]{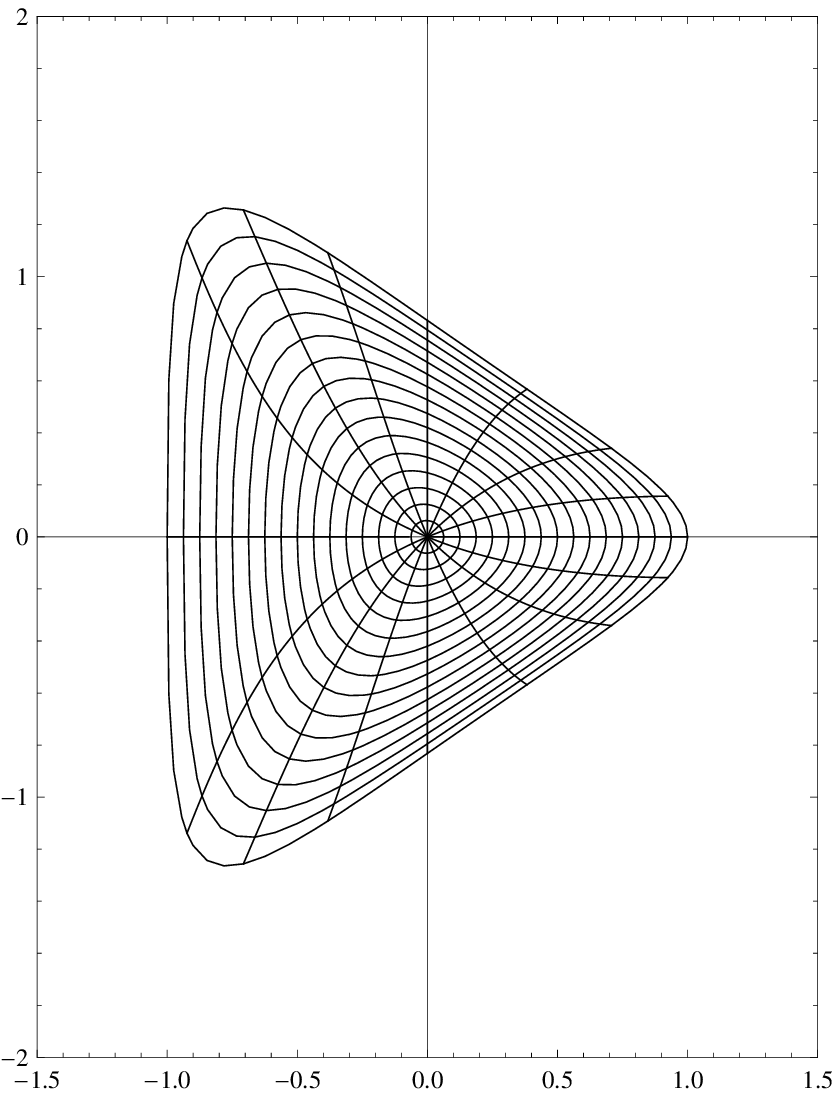}}\hspace{5pt}
  \subfigure[$\Gamma_2*\Psi_2$]{\includegraphics[width=1.5in,height=2in]{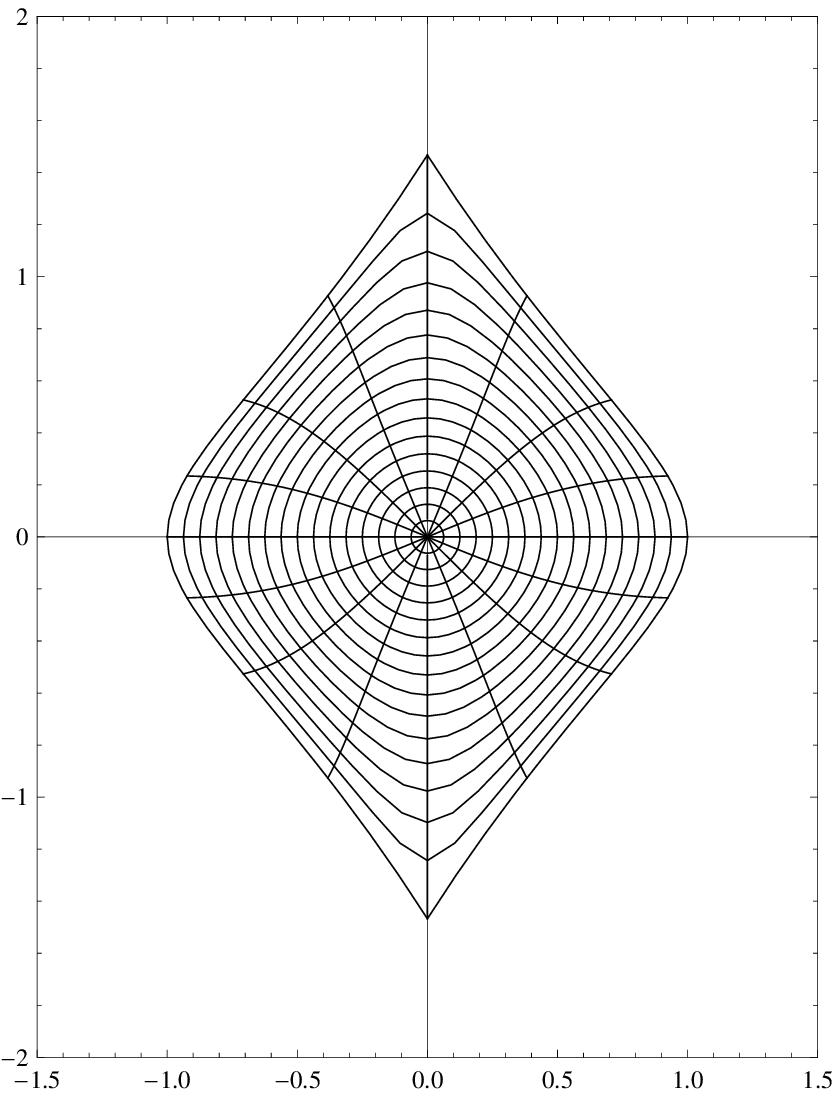}}\hspace{5pt}
  \subfigure[$\Gamma_3*\Psi_3$]{\includegraphics[width=1.5in,height=2in]{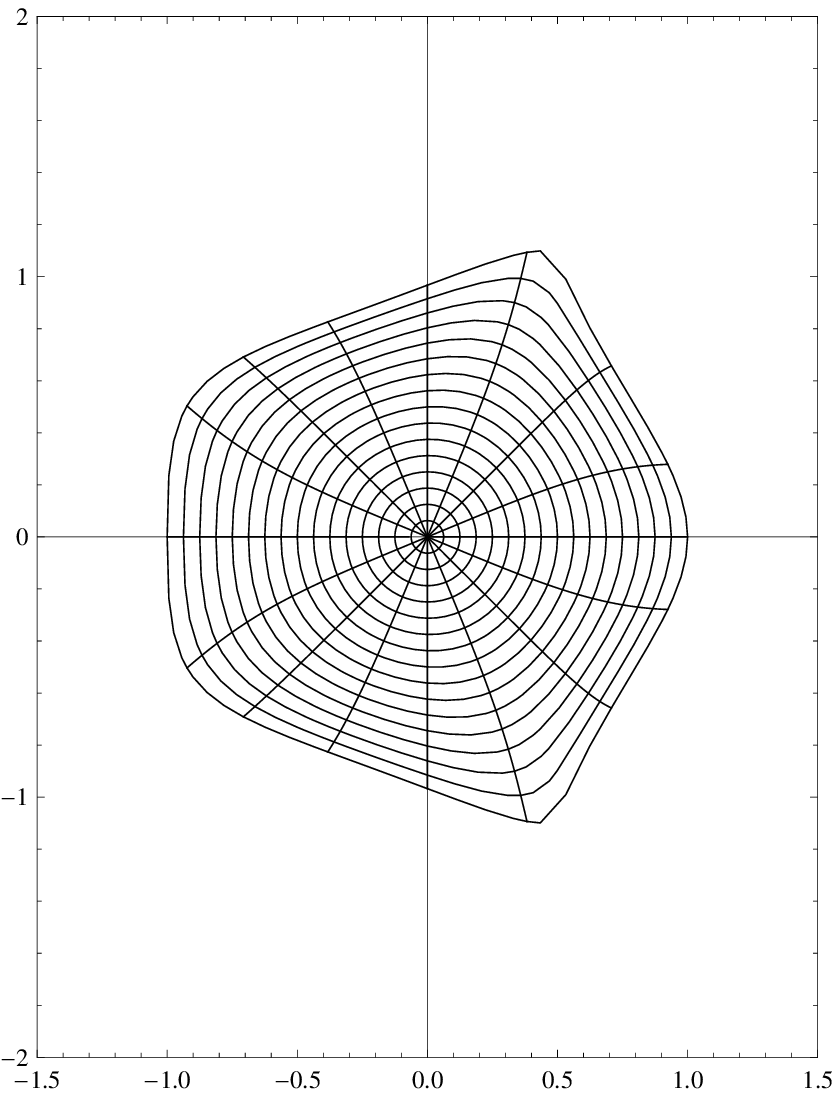}}
  \caption{Images of the functions $\Psi_k$ and the convolutions $\Gamma_k*\Psi_k$ for $k=1,2,3$.}\label{fig8}
\end{center}
\end{figure}

Considering the convolutions $\Gamma_k *\Psi_k$ $(k=1,2,\ldots)$ where $\Gamma_k=\mu_k+\overline{\nu}_k$ are defined in Example \ref{ex2.5}, we see that $\RE (z(\mu_k*\gamma_k)''(z)+(\mu_k*\gamma_k)'(z))=\RE 1/(1+z^k)>1/2$, for $z \in \mathbb{D}$, which imply that $\RE (\mu_k*\gamma_k)'>\log 2$ by \cite[Theorem 2]{silverman}. By Corollary \ref{cor3.2}, it follows that the convolutions
\[(\Gamma_k*\Psi_k)(z)=z+\sum_{n=1}^{\infty}(-1)^n\frac{z^{nk+1}}{(nk+1)^2}+\overline{\sum_{n=1}^{\infty}(-1)^{n+1}\frac{z^{nk+1}}{(nk+1)^2}} \quad (z \in \mathbb{D},k=1,2,\ldots)\]
are univalent and convex in the direction of imaginary axis (see Figure 8).
\end{example}

On taking $f_1 \equiv \Gamma_1$ in Theorem \ref{th3.1} where $\Gamma_1 \in \mathcal{W}_{H}^{-}(z)$ is given by \eqref{eq2.1}, we obtain the following corollary. Its proof being similar to the proof of Theorem \ref{th2.6} is omitted.

\begin{corollary}\label{cor3.5}
Let $f=h+\overline{g} \in \mathcal{W}_{H}^{+}(\phi)$ with $\RE h(z)/\phi(z)>1/2$ for all $z \in \mathbb{D}$. If the analytic function $\mu_1*\phi$ is univalent and convex in the direction of imaginary axis, then $\Lambda_{H}^+[f] \in \mathcal{S}_{H}^{0}$ and is convex in the direction of imaginary axis, $\Lambda_{H}^+$ being the positive harmonic Alexander operator defined by \eqref{eq2.2}.
\end{corollary}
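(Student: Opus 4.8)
The plan is to realize $\Lambda_{H}^{+}[f]$ as a harmonic convolution and then quote Theorem \ref{th3.1}, exactly mirroring the passage from Theorem \ref{th2.1} to Theorem \ref{th2.6}. Recall from \eqref{eq2.2} and the discussion following it that $\Lambda_{H}^{+}[f]=f*\Gamma_1$, where $\Gamma_1=\mu_1+\overline{\nu}_1\in \mathcal{W}_{H}^{-}(z)$ is the function in \eqref{eq2.1}. Accordingly I would set $f_1=\Gamma_1\in \mathcal{W}_{H}^{-}(z)$ and $f_2=f\in \mathcal{W}_{H}^{+}(\phi)$, which is precisely the setting of Theorem \ref{th3.1}, with $h_1=\mu_1$ and $h_2=h$.

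First I would dispatch the hypothesis of Theorem \ref{th3.1}(ii) concerning $h_1*\phi=\mu_1*\phi$: by assumption this analytic function is univalent and convex in the direction of the imaginary axis, so nothing further is required there. Next I would verify the dilatation inequality $\RE (h_1*h_2)'/(h_1*\phi)'>1/2$, i.e.\ $\RE (\mu_1*h)'/(\mu_1*\phi)'>1/2$ in $\mathbb{D}$. Here I use that $z\mu_1'(z)=z/(1-z)$ is the convolution identity for analytic functions, so for any analytic $\psi$ with $\psi'(0)=1$ one has $z(\mu_1*\psi)'(z)=(z\mu_1'*\psi)(z)=\psi(z)$. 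Applying this with $\psi=h$ and $\psi=\phi$ gives $(\mu_1*h)'(z)=h(z)/z$ and $(\mu_1*\phi)'(z)=\phi(z)/z$, whence $(\mu_1*h)'(z)/(\mu_1*\phi)'(z)=h(z)/\phi(z)$, and the required inequality collapses to the assumed $\RE h(z)/\phi(z)>1/2$.

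With both hypotheses of Theorem \ref{th3.1}(ii) in hand, that theorem yields $\Lambda_{H}^{+}[f]=\Gamma_1*f\in \mathcal{S}_{H}^{0}$ and convex in the direction of the imaginary axis, which is the claim. I do not expect a genuine obstacle here; the one step deserving a line of care is the identity $z(\mu_1*\psi)'=\psi$, which is what makes $\Lambda_{H}^{+}$ act like an antiderivative on Taylor coefficients and reduces the convolution-derivative quotient to the plain ratio $h/\phi$. One could equally present this as an immediate transcription of Theorem \ref{th2.6} with ``real axis'' replaced throughout by ``imaginary axis'', since the algebra in the two cases is identical.
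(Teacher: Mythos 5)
Your proposal is correct and is exactly the argument the paper intends: it omits the proof as ``similar to the proof of Theorem \ref{th2.6},'' i.e.\ take $f_1=\Gamma_1$, $f_2=f$ in Theorem \ref{th3.1} and use $z(\mu_1*\psi)'=\psi$ to reduce the derivative quotient to $h/\phi$. Your verification of $(\mu_1*h)'/(\mu_1*\phi)'=h/\phi$ is in fact stated more cleanly than the corresponding line in the paper's proof of Theorem \ref{th2.6}.
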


Next, we provide some examples that illustrate Corollary \ref{cor3.5} for specific choices of $\phi$.

\begin{example}\label{ex3.6}
Consider a non-univalent harmonic function $f=h+\overline{g}$ where $h(z)=z/(1-z^2)^2$ and $g(z)=z^3/(1-z^2)^2$, belonging to the class $\mathcal{W}_H^+(\phi)$, $\phi(z)=z(1+z^2)/(1-z^2)^2$. Note that $\RE h(z)/\phi(z)=\RE 1/(1+z^2)>1/2$ and $(\mu_1*\phi)(z)=z/(1-z^2)$ is univalent and convex in the direction of imaginary  axis. By Corollary \ref{cor3.5}, the convolution
\[(\Gamma_1*f)(z)=\RE \frac{z}{1-z^2}+\frac{1}{2}i \arg \left(\frac{1+z}{1-z}\right),\quad z\in \mathbb{D}\]
is univalent and convex in the direction of imaginary axis (see Figure \ref{fig9}).
\begin{figure}[here]
\begin{center}
  \subfigure{\includegraphics[width=2.8in,height=1.8in]{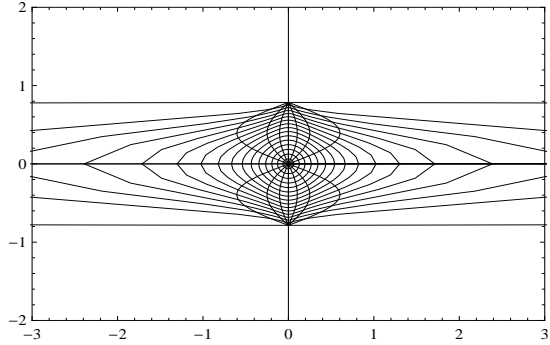}}
  \caption{Image of the convolution of $\Gamma_1$ with $f(z)=z/(1-z^2)^2+\overline{z}^3/(1-\overline{z}^2)^2$.}\label{fig9}
\end{center}
\end{figure}
\end{example}

\begin{example}
The harmonic half-plane mapping $L=U-\overline{V}$, $U$ and $V$ are defined in Example \ref{ex2.12}, belongs to $\mathcal{W}_{H}^+(z/(1-z))$ and satisfies the hypothesis of Corollary \ref{cor3.5}. Therefore the mapping
\[\Lambda_{H}^+[L](z)=-\log |1-z|+i \IM \frac{z}{1-z},\quad z\in \mathbb{D}\]
is univalent and convex in the direction of imaginary axis (see Figure \ref{fig10}).

\begin{figure}[here]
\begin{center}
  \subfigure{\includegraphics[width=2.8in,height=3in]{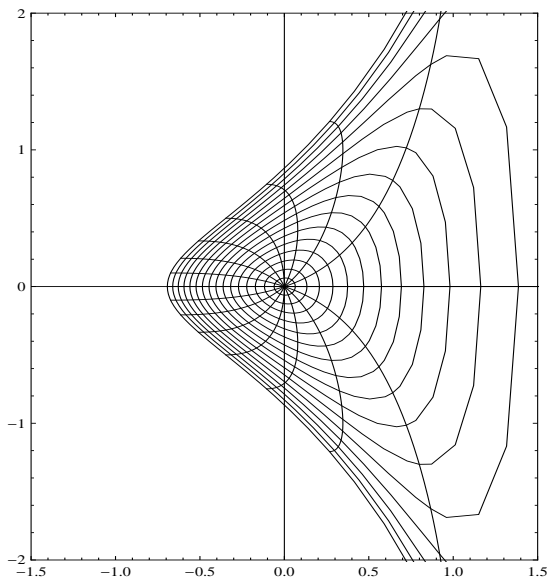}}
  \caption{Image of the convolution of $\Gamma_1$ with the harmonic half-plane mapping $L$.}\label{fig10}
\end{center}
\end{figure}
\end{example}

\section*{Acknowledgements}
The research work presented here is supported by research fellowship from Council of Scientific and
Industrial Research (CSIR), New Delhi and a grant from University of Delhi, Delhi.

\end{document}